\newtheorem{theorem}{Theorem}
\newtheorem{lemma}[theorem]{Lemma}
\theoremstyle{definition}
\newtheorem{definition}[theorem]{Definition}
\newtheorem{example}[theorem]{Example}
\newcommand{\blankbox}[2]{
\parbox{\columnwidth}{\centering

\renewcommand{\boxed}[1]{\text{\fboxsep=.2em\fbox{\m@th$\displaystyle#1$}}}
}
}
\begin{document}

\title{Doubly Sequenceable Groups}

\author{ Mohammad Javaheri  \\
515 Loudon Road \\
Siena College, School of Science\\
Loudonville, NY 12211, USA\\
\small{mjavaheri@siena.edu}  
}

\date{}

\maketitle

\thispagestyle{empty}

\begin{abstract}
Given a sequence ${\bf g}: g_0,\ldots, g_{m}$, in a finite group $G$ with $g_0=1_G$, let ${\bf \bar g}: \bar g_0,\ldots, \bar g_{m}$, be the sequence of consecutive quotients of ${\bf g}$ defined by $\bar g_0=1_G$ and $\bar g_i=g_{i-1}^{-1}g_i$ for $1\leq i \leq m$. We say that $G$ is doubly sequenceable if there exists a sequence ${\bf g}$ in $G$ such that every element of $G$ appears exactly twice in each of ${\bf g}$ and ${\bf \bar g}$. We show that if a group is abelian, odd, sequenceable, R-sequenceable, or terraceable, then it is doubly sequenceable. We also show that if $N$ is an odd or sequenceable group and $H$ is an abelian group, then $N \times H$ is doubly sequenceable. 

\end{abstract}
{{ 2010 MSC: Primary 20K01, 05E15, Secondary 05B30, 20D15, 20F22.
\\
{Keywords: sequenceable, R-sequenceable, solvable binary, terraceable, double Latin squares.}
}

\section{Introduction}

Gordon defined a group to be {\it sequenceable} if there exists a permutation $h_0,h_1,\ldots, h_{n-1}$, of elements of $G$ such that $h_0=1_G$ and the partial products $h_0, h_0h_1, \ldots, h_0\cdots h_{n-1}$, form a permutation of elements of $G$. Gordon proved that a finite abelian group is sequenceable if and only if it is a binary group (i.e., it has a unique element of order 2); \cite{G,JK}. 

Gordon also noticed that the dihedral groups $D_6$ and $D_8$ are not sequenceable. It was later shown by Isbell \cite{Isbell} and Li \cite{Li} that dihedral groups $D_{2n}$ are sequenceable for all $n>4$. Moreover, all solvable binary groups except $Q_8$ are sequenceable \cite{AI2}. See \cite{Ollis} for a survey of results regarding sequenceable groups. We next define the notion of D-sequenceable groups. In the following definition and throughout this paper, given a sequence ${\bf g}$ in a group $G$, we let ${\bf \bar g}$ denote the sequence of consecutive quotients of ${\bf g}$. More precisely, given a sequence ${\bf g}: g_0,\ldots, g_{m}$, in a group $G$ with $g_0=1_G$, the sequence ${\bf \bar g}: \bar g_0,\ldots, \bar g_m$, of consecutive quotients of ${\bf g}$ is defined by ${\bar g}_0=1_G$ and ${\bar g}_i=g_{i-1}^{-1}g_{i}$ for $1\leq i \leq m$, where $1_G$ denotes the identity element of $G$. 

\begin{definition}\label{defdseq}
Let $G$ be a finite group of order $n$, and let ${\bf g}: g_0,\ldots, g_{2n-1}$, be a sequence of elements of $G$ such that $g_0=1_G$. We say that ${\bf g}$ is a {\it double sequencing} of $G$ (D-sequence for short) if each element of $G$ appears twice in each of ${\bf g}$ and ${\bf \bar g}$. If a D-sequence in $G$ exists, we say that $G$ is {\it doubly sequenceable} (D-sequenceable for short). A D-sequence is called {\it cyclic} if $g_{2n-1}=1_G$.
\end{definition}

The correspondence ${\bf g} \leftrightarrow {\bf \bar g}$ is a 1-1 correspondence, since the sequence ${\bf g}$ is the sequence of partial products of ${\bf \bar g}$:
$$g_i=(g_0^{-1}g_1) \cdots (g_{i-1}^{-1}g_i) =\bar g_1 \cdots \bar g_i,$$
for $i\geq 1$. Therefore, Definition \ref{defdseq} can be rewritten in terms of partial products to state that a group $G$ is D-sequenceable if there exists a sequence ${\bf h}: h_0,\ldots, h_{2n-1}$, with $h_0=1_G$ such that every element of $G$ appears twice as an element of ${\bf h}$ and twice as a partial product of ${\bf h}$.

Recall that a {\it Latin square} is an $n\times n$ array $(L_i^j)_{1 \leq i,j \leq n}$ on $n$ symbols such that each symbol appears exactly once in each row and exactly once in each column. A Latin square is called {\it row-complete} if, for any pair $(\alpha, \beta)$ of distinct symbols, the simultaneous equations 
\begin{equation}\label{dlatin}
L_{i}^j=\alpha,~L_{i}^{j+1}=\beta,
\end{equation}
have a unique solution in $(i,j)$, where $1 \leq i \leq n$ and $1\leq j \leq n-1$. Similarly, one defines the notion of {\it column-complete}. A Latin square is called {\it complete} if it is both row-complete and column-complete. 

If $G$ is sequenceable and ${\bf g}: g_0,\ldots, g_{n-1}$, has the property that every element of $G$ appears exactly once in each of ${\bf g}$ and ${\bf \bar g}$, then the matrix $(g_i^{-1}g_j)_{0 \leq i, j \leq n-1}$ is a complete Latin square \cite{G}. The following definition is thus motivated in the context of double Latin squares \cite{AH1,AH2,dls}.  

\begin{definition}
A {\it double Latin square} is a $2n \times 2n$ array on $n$ symbols such that each symbol appears twice in each row and twice in each column. We say a double Latin square is {\it row-complete} if the simultaneous equations \eqref{dlatin} have exactly four solutions in $(i,j)$, where $1\leq i \leq 2n$ and $1\leq j \leq 2n-1$. The notion of {\it column-complete} is defined similarly. A double Latin square is called {\it complete} if it is both row-complete and column-complete. 
\end{definition}

If ${\bf g}$ is a D-sequence in $G$, then the matrix $(g_i^{-1}g_j)_{0 \leq i, j \leq 2n-1}$ is a complete double Latin square. To see the row-completeness for example, let $\alpha, \beta \in G$ be distinct. Since $\alpha^{-1}\beta \neq 1_G$, there exist two values of $j$, $0\leq j \leq 2n-2$, such that $\bar g_{j+1}=\alpha^{-1}\beta$. For each such value of $j$, there exist two values of $i$, $0\leq i \leq 2n-1$, such that $g_i=g_j\alpha^{-1}$. Therefore, there exist four pairs $(i,j)$ such that $g_i^{-1}g_j=\alpha$ and $g_i^{-1}g_{j+1}=g_i^{-1}g_{j} {\bar g}_{j+1}=\alpha(\alpha^{-1}\beta)=\beta$. 

If ${\bf g}: g_0,\ldots, g_{2n-1},$ is a cyclic D-sequence in group $G$, then the matrix $(g_{i}^{-1}g_{j+1})_{0\leq i,j \leq 2n-1}$, with $g_{2n}=g_0$, has the property that every element of $G$ appears exactly twice in each row, each column, and the main diagonal. 

In this paper, we gather some evidence in support of the following conjecture:
\\
\\
{\bf Double Sequencing Conjecture}: All groups are D-sequenceable. 
\\

It is straightforward to show that abelian groups are all D-sequenceable (Theorem \ref{abeliancase}). The proof relies on a theorem by Alspach, Kreher, and Pastine, who proved that every abelian group is either sequenceable or R-sequenceable \cite{AKP}. 

\begin{definition}
A group $G$ is said to be {\it R-sequenceable} if there exists a permutation $g_1,\ldots, g_{n-1},$ of the non-identity elements of $G$ such that the consecutive quotients $g_1^{-1}g_2$, $g_2^{-1}g_3$, $\ldots$, $g_{n-2}^{-1}g_{n-1}$, $g_{n-1}^{-1}g_1$, also form a permutation of the non-identity elements of $G$. The sequence ${\bf g}$ is then called an {\it R-sequence}. 
\end{definition}
 
The D-sequencing conjecture above follows from Keedwell’s Conjecture \cite{Keedwell}, which states that, except for the dihedral groups $D_6, D_8$, and the quaternion group $Q_8$, every non-abelian group is sequenceable. These three exceptions are demonstrably D-sequenceable, hence, if Keedwell’s Conjecture holds, then every non-abelian group is also D-sequenceable. 

If $N$ is an odd group, then $N$ is solvable (by the Feit-Thompson Theorem), hence $N\times \mathbb{Z}_2$ is a solvable binary group (here and throughout, the cyclic group of order $k$ is denoted by $\mathbb{Z}_k$). Since all solvable binary groups, except $Q_8$, are sequenceable \cite{AI1}, it follows that $N$ is D-sequenceable (by projecting a sequencing in $N \times \mathbb{Z}_2$ onto $N$). Therefore, all odd groups are D-sequenceable. 

A group $G$ is said to be {\it terraced} if there exists a sequence ${\bf g}: g_0,\ldots, g_{n-1}$, called a {\it terrace}, such that every element of $G$ appears exactly once in ${\bf g}$ and the sequence ${\bf \bar g}$ has the following properties:
\begin{align}\nonumber
\forall \alpha \in G: &~\alpha^2 \neq 1_G \Rightarrow \#\{0\leq i \leq n-1: \bar g_i= \alpha~\mbox{or}~\bar g_i=\alpha^{-1}\}=2;\\ \nonumber
\forall \alpha \in G: &~\alpha^2 = 1_G \Rightarrow \#\{0\leq i \leq n-1: \bar g_i= \alpha\}=1.
\end{align}
If ${\bf g}$ is a terrace of $G$, then the sequence 
$$g_0,g_1,\ldots, g_{n-2}, g_{n-1},g_{n-1},g_{n-2},\ldots, g_1, g_0,$$
is a D-sequence of $G$. Bailey conjectured that all finite groups except elementary abelian 2-groups of order at least 4 are terraced \cite{Bailey}. Bailey’s Conjecture has been verified \cite{Anderson} for all groups of order up to 87 (except possibly 64). Since all abelian groups and terraced groups are D-sequenceable, the D-sequencing conjecture follows from Bailey’s Conjecture.

As we have mentioned so far, if a group $G$ is abelian, odd, sequenceable, R-sequenceable, or terraceable, then $G$ is D-sequenceable. The following theorem provides more examples of D-sequenceable groups. 

\begin{theorem}\label{mainnil}
Let $N$ be an odd or sequenceable group and $H$ be an abelian group. Then $N \times H$ is D-sequenceable. \end{theorem}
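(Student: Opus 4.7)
I split the argument into two cases based on the hypothesis on $N$, and reduce the odd case to the sequenceable case.

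In Case~1 ($N$ sequenceable), let $p_0 = 1_N, p_1, \ldots, p_{n-1}$ be a sequencing of $N$, with multipliers $h_i := p_{i-1}^{-1} p_i$ forming a permutation of $N$. Since $H$ is abelian, the Alspach--Kreher--Pastine theorem gives that $H$ is sequenceable or R-sequenceable; in either case $H$ is D-sequenceable (Theorem~\ref{abeliancase}), so fix a D-sequence $b_0 = 1_H, b_1, \ldots, b_{2m-1}$ of $H$. The plan is to construct a D-sequence of $N \times H$ of length $2nm$ by choosing a bijective traversal $t \mapsto (\rho(t), \kappa(t))$ of $\{0,\ldots,n-1\} \times \{0,\ldots,2m-1\}$ and setting $g_t := (p_{\rho(t)}, b_{\kappa(t)})$. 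Since $\{p_i\}$ permutes $N$ and $\{b_j\}$ covers $H$ twice, this automatically visits each element of $N \times H$ twice. The delicate point is to arrange the traversal so that the consecutive quotients $\bar g_t = \bigl(p_{\rho(t-1)}^{-1} p_{\rho(t)},\ b_{\kappa(t-1)}^{-1} b_{\kappa(t)}\bigr)$ also cover each element of $N \times H$ exactly twice.

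For Case~2 ($N$ odd), decompose $H = H_{\mathrm{odd}} \times H_2$ where $H_2$ is the Sylow-$2$ subgroup of $H$. If $H_2 = \{1_H\}$, then $N \times H$ has odd order and is D-sequenceable by the fact recalled in the introduction. Otherwise the finite abelian $2$-group $H_2$ decomposes as $H_2 \cong \mathbb{Z}_{2^a} \times H_2'$ for some $a \geq 1$; set $M := N \times H_{\mathrm{odd}} \times \mathbb{Z}_{2^a}$. Then $M$ is solvable (using Feit--Thompson for $N$), and since $N$ and $H_{\mathrm{odd}}$ have odd order, the unique element of order $2$ in $M$ is $(1_N, 1_{H_{\mathrm{odd}}}, 2^{a-1})$, so $M$ is solvable binary. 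Its Sylow-$2$ subgroup is cyclic, so $M \not\cong Q_8$, and the Anderson--Ihrig theorem then gives that $M$ is sequenceable. Applying Case~1 to the sequenceable group $M$ and the abelian group $H_2'$ yields that $M \times H_2' \cong N \times H$ is D-sequenceable.

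The main obstacle is the construction in Case~1. Naive choices of traversal---lexicographic in a single coordinate, or boustrophedon with the $H$-coordinate held constant within each row---concentrate the consecutive quotients on one of the ``axes'' $(g, 1_H)$ or $(1_N, h)$ and produce gross over-counts. A successful traversal must move in both coordinates at nearly every step, with shifts chosen so that the permutation structure of the multipliers $\{h_i\}$ interacts correctly with the doubled coverage provided by $\{b_j\}$, spreading the quotients so that each element of $N \times H$ occurs exactly twice. Designing this interlocking combinatorial structure is where I expect the bulk of the technical work to lie.
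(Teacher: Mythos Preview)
Your reduction in Case~2 is essentially the paper's argument: peel off one cyclic $2$-factor from $H$, use Anderson--Ihrig to make $N$ times that factor sequenceable, and fall back on Case~1. That part is fine.

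The genuine gap is Case~1. You have not given a construction; you have only described the shape one should have and then said ``designing this interlocking combinatorial structure is where I expect the bulk of the technical work to lie.'' That is precisely the theorem. Worse, the framework you set up makes the problem harder than necessary: once you allow an arbitrary bijective traversal $t\mapsto(\rho(t),\kappa(t))$, the first-coordinate quotient $p_{\rho(t-1)}^{-1}p_{\rho(t)}$ is an arbitrary element of $N$, so the sequencing property of $(p_i)$ gives you no leverage at all unless $\rho$ is already almost monotone. You would need to simultaneously force $\rho$ to walk $0,1,\dots,n-1$ (or its reverse) in long runs \emph{and} synchronise $\kappa$ so that the $H$-quotients pair correctly with the $N$-quotients; nothing in your plan says how to do this, and an off-the-shelf D-sequence of $H$ carries no extra structure to exploit.

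The paper does not attempt a one-shot product construction with a generic D-sequence of $H$. Instead it writes $H=(\mathbb{Z}_2)^k\times H'$ with $H'$ having no $\mathbb{Z}_2$ primary factors, first builds a \emph{cyclic} D-sequence on $N\times(\mathbb{Z}_2)^k$ directly from the sequencing of $N$ (Lemmas~\ref{oddcase} and~\ref{terraced2}, with an auxiliary combinatorial sequence in $(\mathbb{Z}_2)^k$ from Lemma~\ref{2filter}), and then lifts that cyclic D-sequence one cyclic factor of $H'$ at a time (Lemmas~\ref{extodd} and~\ref{extpower2}, the latter requiring the hand-built permutations $X,B,D$ of Lemma~\ref{lemtec}). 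The point is that a cyclic D-sequence can be lifted by $\mathbb{Z}_m$ whenever $m\not\equiv 2\pmod 4$, which is why the $\mathbb{Z}_2$ factors must be absorbed separately at the start. All of the work is in these lifting lemmas; your plan bypasses them without a substitute.
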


We now describe how this paper is organized. In Section \ref{abel}, we show how Alspach, Kreher, and Pastine's theorem imply that all abelian groups are D-sequenceable. In Section \ref{lift}, we show that if $H$ has a cyclic D-sequence, then $H \times \mathbb{Z}_{m}$ has a cyclic D-sequence, where $m \neq 2 \pmod 4$ (Lemmas \ref{extodd} and \ref{extpower2}). In Section \ref{lift2}, we show that if $N$ is a sequenceable group, then $N \times (\mathbb{Z}_2)^k$ has a cyclic D-sequence for any $k \geq 0$. Finally, in Section \ref{maint}, we prove our main theorem (Theorem \ref{mainnil}) which states that the direct product of an odd or sequenceable group with an abelian group is D-sequenceable. 
%
%\section*{Acknowledgement}
%The author would like to thank the referees who made many useful suggestions. Using their suggestions, several definitions and proofs were modified to make the manuscript more readable. 

\section{Abelian groups are D-sequenceable}\label{abel}
In this section, we show that abelian groups are D-sequenceable. We first prove two lemmas that show that every sequenceable or R-sequenceable group is D-sequenceable, and then the claim follows from the main result in \cite{AKP}. 

\begin{lemma}\label{sR1}
Let $G$ be a sequenceable group and $k\geq 2$. Then there exists a sequence ${\bf p}: p_0,\ldots, p_{kn-1},$ with $p_0=1_G$, such that every element of $G$ appears exactly $k$ times in each of ${\bf p}$ and ${\bf \bar p}$. Moreover, when $G$ is sequenceable and $k=2$, one can choose ${\bf p}$ to be a cyclic D-sequence. 
\end{lemma}

\begin{proof}
Since $G$ is sequenceable, there exists a sequence ${\bf g}: g_0,\ldots, g_{n-1}$, in $G$ such that $g_0=1_G$ and every element of $G$ appears exactly once in each of ${\bf g}$ and ${\bf \bar g}$. Then, the sequence
$$p_i=\begin{cases}
g_i & \mbox{if $\lfloor i/n \rfloor$ is even,}\\
g_{n-i-1} & \mbox{if $\lfloor i/n \rfloor$ is odd,}
\end{cases}
$$
where $0\leq i \leq kn-1$, has the property that each element of $G$ appears $k$ times in each of ${\bf p}$ and ${\bf \bar p}$ (here, the index $i$ in $g_i$ is computed modulo $n$). The sequence ${\bf p}$ is given by $k$ copies of ${\bf g}$ in alternating increasing and decreasing orders. When $k$ is even, we have (the case of odd $k$ is similar):
$${\bf p}: \underbrace{g_0, g_1, \ldots, g_{n-1}}_\text{increasing index}, \underbrace{g_{n-1}, \ldots, g_1,g_0}_\text{decreasing index}, \ldots , \underbrace{g_{n-1}, \ldots, g_1, g_{0}}_\text{decreasing index},$$
while ${\bf \bar p}$ is given by $k$ copies of ${\bf \bar g}$ and its inverse in alternating order:
$${\bf \bar p}: \underbrace{\bar g_0, \bar g_1, \ldots, \bar g_{n-1}}_\text{increasing index}, \underbrace{(\bar g_n)^{-1},(\bar g_{n-1})^{-1}, \ldots, (\bar g_1)^{-1}}_\text{inverted/decreasing index}, \ldots , \underbrace{ (\bar g_n)^{-1}, (\bar g_{n-1})^{-1}, \ldots, (\bar g_1)^{-1}}_\text{inverted/decreasing index},$$
where $g_n=g_0=1_G$. When $k=2$, the sequence ${\bf p}$ is a cyclic D-sequence. 
\end{proof}

\begin{lemma}\label{sR2}
Let $G$ be an R-sequenceable group and $k\geq 2$. Then there exists a sequence ${\bf p}: p_0,\ldots, p_{kn-1},$ with $p_0=p_{kn-1}=1_G$, such that every element of $G$ appears exactly $k$ times in each of ${\bf p}$ and ${\bf \bar p}$. 
\end{lemma}

\begin{proof}
Let ${\bf g}: g_1,\ldots, g_{n-1}$, be a permutation of the non-identity elements of $G$ such that the consecutive quotients $g_1^{-1}g_2, \ldots, g_{n-2}^{-1}g_{n-1}$, $g_{n-1}^{-1}g_1$, also form a permutation of the non-identity elements of $G$. There exist $r,s \in \{1, \ldots, n-1\}$ such that  $g_r=g_{n-1}^{-1}$ and $g_{n-1}^{-1}g_{n-2}=g_s$. One has $r\neq s$, since $g_{n-2} \neq 1_G$. We list the sequence ${\bf g}$ rotationally and then write it starting from $g_{s+1}$: 
$${\bf l}_0: g_{s+1},g_{s+2},\ldots, g_{n-1}, g_1, g_2, \ldots, g_{s},$$
if $s<n-1$. If $s=n-1$, we simply let ${\bf l}_0={\bf g}$. Then, we duplicate the term $g_r$ when encountered in ${\bf l}_0$ above to get a new list ${\bf l}_1$ of length $n$. If $r>s$, then ${\bf l}_1$ would look like:
$${\bf l}_1: g_{s+1},g_{s+2},\ldots, g_r, g_r, \ldots, g_{n-1}, g_1, g_2, \ldots, g_{s};~r>s,$$
and if $r<s$, then ${\bf l}_1$ would look like:
$${\bf l}_1:g_{s+1},g_{s+2},\ldots, g_{n-1}, g_1, g_2, \ldots, g_r, g_r, \ldots, g_{s};~r<s.$$
Next, consider the list ${\bf l}_2$ of length $n$ obtained by multiplying ${\bf g}$ by $g_{n-1}^{-1}$ and inserting $1_G$ at the beginning of the list:
$${\bf l}_2:1_G, g_{n-1}^{-1}g_1,\ldots, g_{n-1}^{-1}g_{n-2}, \ldots, g_{n-1}^{-1}g_{n-2}, 1_G.$$
Now, we insert ${\bf l}_1$ between the last two terms of ${\bf l}_2$ to obtain the sequence:
$${\bf q}: \underbrace{1_G, g_{n-1}^{-1}g_1,\ldots, g_{n-1}^{-1}g_{n-2}}_\text{the first $n$ terms of ${\bf l}_2$}, \underbrace{g_{s+1},g_{s+2},\ldots, g_r, g_r, \ldots, g_s}_\text{sequence ${\bf l}_1$ inserted}, 1_G.$$
Consequently, the sequence ${\bf \bar q}$ is given by:
$${\bf \bar q}: 1_G, g_{n-1}^{-1}g_1, g_1^{-1} g_2, \ldots, g_{n-3}^{-1} g_{n-2}, g_s^{-1}g_{s+1}, \ldots, g_{r-1}^{-1} g_r, 1_G, g_r^{-1} g_{r+1}, \ldots, g_{s-1}^{-1} g_s, g_s^{-1}.$$
To be more precise, if $r>s$, let
$$q_i=\begin{cases}
1_G & \mbox{if $i=0, 2n-1$};\\
g_{n-1}^{-1}g_i & \mbox{if $1\leq i \leq n-2$};\\
g_{i-n+s+2} & \mbox{if $n-1\leq i \leq n-2+r-s$};\\
g_{i-n+s+1} & \mbox{if $n-1+r-s \leq i \leq 2n-2-s$};\\
g_{i-2n+s+2} & \mbox{if $2n-1-s \leq i \leq 2n-2$}.
\end{cases}
$$
A similar formula can be derived for the case of $r<s$. It is straightforward to show that each of ${\bf q}$ and ${\bf \bar q}$ contain every element of $G$ exactly twice, hence ${\bf q}$ is a cyclic D-sequence in $G$. 

For $k\geq 3$, one inserts $k-2$ copies of ${\bf l}_0$ between the last two terms of ${\bf q}$ and inserts $k-2$ copies of $1_G$ at the end of ${\bf q}$:
$${\bf p}: \underbrace{1_G, g_{n-1}^{-1}g_1,\ldots, g_{s-1}, g_s}_\text{the first $2n-1$ terms of ${\bf q}$}, \underbrace{{\bf l}_0,~~ \ldots \ldots ~~ {\bf l}_0}_\text{$k-2$ copies of ${\bf l}_0$}, \underbrace{1_G~~ \ldots \ldots ~~1_G}_\text{$k-2$ copies of $1_G$},1_G.$$ 
Then every element of $G$ appears $k$ times in ${\bf p}$ and $k$ times in ${\bf \bar p}$. 
\end{proof}

In the example below, we follow the algorithm described in Lemma \ref{sR2} to obtain a D-sequence for $\mathbb{Z}_5$. 

\begin{example}\label{exz5}
The sequence $1, 3, 4, 2$ is an R-sequence in $\mathbb{Z}_5$ (if $a$ is primitive root modulo an odd prime $p$, then the sequence $1, a, a^2, \ldots, a^{p-2}$ is an R-sequence in $\mathbb{Z}_p$; here $p=5$ and $a=3$). One has $r=2$ and $s=4$. Then ${\bf l}_0: 1,3,4,2$, while ${\bf l}_1: 1, 3, 3, 4, 2$, and ${\bf l}_2: 0, 4, 1, 2, 0$. By inserting ${\bf l}_1$ between the last two terms of ${\bf l}_2$, we get a D-sequence in $\mathbb{Z}_5$: $0, 4, 1, 2, 1, 3, 3, 4, 2, 0.$
\end{example}

Since every abelian group is either sequenceable or R-sequenceable \cite{AKP}, the theorem below follows from Lemmas \ref{sR1} and \ref{sR2}. 

\begin{theorem}\label{abeliancase}
Every abelian group is D-sequenceable.  
\end{theorem}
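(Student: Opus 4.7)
The plan is to deduce the theorem immediately from the combination of Lemma \ref{sR} and the Alspach--Kreher--Pastine dichotomy cited just above the statement. Let $G$ be an arbitrary finite abelian group. By \cite{AKP}, $G$ is either sequenceable or R-sequenceable, so in either case the hypothesis of Lemma \ref{sR} is satisfied.

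I would then apply Lemma \ref{sR} with $k=2$. In the sequenceable case the lemma directly delivers a cyclic D-sequence; in the R-sequenceable case it delivers a sequence $\mathbf{p}: p_0,\ldots, p_{2n-1}$ with $p_0 = 1_G$ in which every element of $G$ appears exactly twice in each of $\mathbf{p}$ and $\bar{\mathbf{p}}$, which is precisely the definition of a D-sequence (cyclicity is not required here). Either way, $G$ is D-sequenceable, proving the theorem.

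Since the work has been fully absorbed into Lemma \ref{sR}, there is essentially no obstacle; the only content of the argument is the observation that the $k=2$ output of Lemma \ref{sR} matches the definition of D-sequence verbatim, so the theorem is a one-line corollary. The only thing worth flagging explicitly in the write-up is that in the R-sequenceable case one obtains a (possibly non-cyclic) D-sequence, which is still enough for D-sequenceability as defined.
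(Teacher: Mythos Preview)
Your proposal is correct and matches the paper's own argument essentially verbatim: the paper also derives Theorem~\ref{abeliancase} as an immediate corollary of Lemma~\ref{sR} combined with the Alspach--Kreher--Pastine result from \cite{AKP}. Your additional remark distinguishing the cyclic (sequenceable) and possibly non-cyclic (R-sequenceable) cases is accurate and slightly more explicit than the paper's one-line justification.
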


\section{The case of $\mathbb{Z}_m$, $m \neq 2 \pmod 4$}\label{lift}

Let ${\bf h}: h_0,\ldots, h_{2n-1}$, be a D-sequence in $H$ and ${\bf g}: g_0,\ldots, g_{2kn-1}$, be a D-sequence in $G=H \times K$. We say that ${\bf g}$ is a {\it lift} of ${\bf h}$ by $K$ if $\pi(g_{i+2tn})=h_i$ for all $0\leq i \leq 2n-1$ and $0\leq t \leq k-1$, where $\pi: G \rightarrow H$ is the projection onto the first component.

In this section, we show that every cyclic D-sequence has a cyclic D-sequence lift by $\mathbb{Z}_m$, $m>2$, when $m$ is odd or a power of 2 (Lemmas \ref{extodd} and \ref{extpower2}). It follows that, more generally, every cyclic D-sequence has a cyclic D-sequence lift by $\mathbb{Z}_m$ if $m \neq 2 \pmod 4$. To see this, let $m=2^l k$, where $k$ is odd and $l\geq 0$. If $m\neq 2 \pmod 4$, then either $l=0$ or $l\geq 2$. In the latter case, a lift by $\mathbb{Z}_{k}$ followed a lift by $\mathbb{Z}_{2^l}$ gives a lift by $\mathbb{Z}_m$.

\begin{lemma}\label{extodd}
Every cyclic D-sequence in $H$ has a cyclic D-sequence lift in $H \times \mathbb{Z}_m$, where $m$ is any odd positive integer. 
\end{lemma}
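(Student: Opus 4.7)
My plan is to write the lifted sequence as a concatenation of $m$ slabs of length $2n$, each a copy of ${\bf h}$ carrying a $\mathbb{Z}_m$-tag in the second coordinate. Concretely, set $g_{i+2tn}=(h_i,a_{i,t})$ for $0\le i\le 2n-1$ and $0\le t\le m-1$; the first coordinate and the lift condition are then automatic, and the entire problem collapses to choosing the matrix $A=(a_{i,t})\in\mathbb{Z}_m^{2n\times m}$. Cyclicity of ${\bf g}$ forces $a_{0,0}=0$ (since $g_0=1_G$ and $h_0=1_H$) and $a_{2n-1,m-1}=0$ (since $g_{2mn-1}=1_G$ and $h_{2n-1}=1_H$, using that ${\bf h}$ is cyclic).

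Next I would translate the two double-sequencing conditions on ${\bf g}$ into conditions on $A$. For each $h\in H$ appearing at positions $i_1(h),i_2(h)$ in ${\bf h}$, the vertex condition on ${\bf g}$ requires the $2m$-element multiset $\{a_{i_1(h),t},a_{i_2(h),t}:0\le t\le m-1\}$ to cover $\mathbb{Z}_m$ with multiplicity two. For each $\bar h\in H$ with $\bar h\ne 1_H$, appearing at positions $j_1,j_2\in\{1,\ldots,2n-1\}$ in ${\bf\bar h}$, the quotient condition requires the multiset of internal differences $\{a_{j_k,t}-a_{j_k-1,t}:k=1,2;\;0\le t\le m-1\}$ to cover $\mathbb{Z}_m$ with multiplicity two. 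The identity $\bar h=1_H$ is handled similarly, but now with the contributions $\bar g_0=1_G$ and the seam differences $\bar g_{2tn}=(1_H,a_{0,t}-a_{2n-1,t-1})$ for $1\le t\le m-1$ added in alongside the one internal occurrence of $1_H$ in ${\bf\bar h}$.

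The construction I would try is a uniform shift perturbed by row-pair swaps. Start with $a_{i,t}=t$, which satisfies the vertex condition for every $h$ (each row is $0,1,\ldots,m-1$, so any two rows cover $\mathbb{Z}_m$ twice) and reduces the problem to repairing the quotient condition, since under this baseline every internal difference vanishes. The repair is to replace, for each $h\in H$ and each $t$, the pair $(a_{i_1(h),t},a_{i_2(h),t})$ by a controlled perturbation $(t+\varepsilon_t(h),t-\varepsilon_t(h))$; swapping inside a row pair preserves the vertex multiset automatically, and the odd-$m$ hypothesis enters through the fact that $2$ is a unit in $\mathbb{Z}_m$, equivalently $x\mapsto x$ is a complete mapping, so that as $\varepsilon_t(h)$ ranges over $\mathbb{Z}_m$ the pair $(\varepsilon_t(h),-\varepsilon_t(h))$ sweeps out the whole group in a balanced way. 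Choosing $\varepsilon_t(h)$ of the form $t\cdot\chi(h)$ for a suitable function $\chi:H\to\mathbb{Z}_m$, together with an adjustment at the two boundary rows, is my first concrete attempt.

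The main obstacle is that an alteration of $a_{i,t}$ simultaneously affects the internal differences at positions $i$ and $i+1$, so the perturbations of adjacent row pairs are coupled, and the identity row pair $\{0,2n-1\}$ must also satisfy the cyclicity constraint $a_{2n-1,m-1}=0$ while governing the seam differences. I would manage this by first fixing the boundary rows $a_{0,t},a_{2n-1,t}$ so that the seam contributions, pooled with $\bar g_0$ and the other internal occurrence of $1_H$ in ${\bf\bar h}$, distribute evenly over $\mathbb{Z}_m$; then choose the interior swaps $\varepsilon_t(h)$ one row pair at a time, using the invertibility of $2$ in $\mathbb{Z}_m$ to solve the resulting linear system and guarantee at each step that the required residues are available. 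If the direct construction is too rigid, the fallback is to argue inductively on the prime factorization of $m$, reducing to the prime case where the combinatorial slack is largest.
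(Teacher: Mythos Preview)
Your proposal is not a proof: it outlines a strategy, names the central obstacle, and then does not resolve it. The row-pair perturbation $(a_{i_1(h),t},a_{i_2(h),t})=(t+\varepsilon_t(h),t-\varepsilon_t(h))$ is designed to preserve the vertex multiset, but the quotient condition lives on \emph{adjacent} positions $(i-1,i)$, not on the $h$-pairing, which is exactly the coupling you flag. Under the ansatz $\varepsilon_t(h)=t\chi(h)$ the internal difference at position $i$ becomes $t\bigl(\pm\chi(h_i)\mp\chi(h_{i-1})\bigr)$, so for each difference row to be a permutation of $\mathbb{Z}_m$ you would need every quantity $\pm\chi(h_i)\mp\chi(h_{i-1})$ to be a unit modulo $m$, with the sign pattern determined by which occurrence of $h_i$ and $h_{i-1}$ you are looking at. You never write down this system, let alone show it is solvable; ``choose the swaps one row pair at a time'' does not address that a choice made at row pair $\{i_1(h),i_2(h)\}$ constrains the differences at four positions simultaneously, which in turn constrain other row pairs. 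The fallback ``reduce to the prime case by factoring $m$'' is not a fallback unless you can handle the prime case, and nothing in the proposal does.

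The paper's construction avoids all of this by making the tag depend only on the parity of the position, not on $h_i$: take $a_{i,t}=-t$ when $i$ is even and $a_{i,t}=t+1$ when $i$ is odd. Then every single row is already a permutation of $\mathbb{Z}_m$, so the vertex condition holds regardless of how the $h$-pairings fall; and the difference row at any interior position is $\{\pm(2t+1):0\le t\le m-1\}$ and at a seam is $\{-2t:0\le t\le m-1\}$, each a permutation of $\mathbb{Z}_m$ because $m$ is odd, so the quotient condition also holds regardless of the $\bar h$-pairings. The key idea you are missing is that one need not match the $H$-structure at all: the parity of the index carries the whole argument, and the oddness of $m$ enters only through the fact that $t\mapsto 2t$ and $t\mapsto 2t+1$ are bijections of $\mathbb{Z}_m$.
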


\begin{proof}
Let ${\bf h}: h_0,\ldots, h_{2n-1}$, be a D-sequence in $H$ such that $h_{2n-1}=h_0=1_H$. We define a D-sequence ${\bf g}$ in $H \times \mathbb{Z}_m$ as follows. Given $0\leq i \leq 2mn-1$, we write $i=2qn+r$, where $0\leq q \leq m-1$ and $0\leq r \leq 2n-1$, and let
$$g_i=\begin{cases}
(h_r,-q) & \mbox{if $r$ is even,}\\
(h_r,q+1) & \mbox{if $r$ is odd.}
\end{cases} \Rightarrow \bar g_i=\begin{cases}
(\bar h_r,-2q) & \mbox{if $r=0$,}\\
(\bar h_r,2q+1) & \mbox{if $r$ is odd,}\\
(\bar h_r,-2q-1) & \mbox{otherwise.}
\end{cases}
$$

We first show that every $(h,t)\in H \times \mathbb{Z}_m$ appears twice in ${\bf g}$. Let $p,q \in \{0,\ldots, m-1\}$ such that $p=-t \pmod m$ and $q=t-1 \pmod m$. Since every element of $H$ appears twice in ${\bf h}$, there exist distinct values $r,s \in \{0,\ldots, 2n-1\}$ such that $h_{r}=h_{s}=h$. If $r$ is even, we let $i=2pn+r$, and if $r$ is odd, we let $i=2qn+r$. Similarly, if $s$ is even, we let $j=2pn+s$, and if $s$ is odd, we let $j=2qn+s$. We note that $i \neq j$, since $r \neq s$. It follows that $g_i=g_j=(h,t)$, and so every element of $H \times \mathbb{Z}_m$ appears at least twice, hence exactly twice, in ${\bf g}$.

Next, we show that every $(u, v) \in H \times \mathbb{Z}_m$ appears twice in ${\bf \bar g}$. Since every element of $H$ appears twice in ${\bf \bar h}$, there exist $r,s \in \{0,\ldots, 2n-1\}$ such that $\bar h_{r}=\bar h_{s}=u$. Suppose that $u \neq 1_H$, and so $r,s>0$. Since $m$ is odd, there exist $p,q \in \{0,\ldots, m-1\}$ such that $2p+1=-v \pmod m$ and $2q+1=v \pmod m$. If $r$ is even, we let $i=2pn+r$, and if $r$ is odd, we let $i=2qn+r$. Similarly, if $s$ is even, we let $j=2pn+s$, and if $s$ is odd, we let $j=2qn+s$. It follows that $i \neq j$ and $\bar g_i=\bar g_j=(u,v)$ in this case. If $u=1_H$, let $p,q \in \{0,\ldots, m-1\}$ such that $-2p=v \pmod m$ and $2q+1= v \pmod m$. Let $i=2pn$ and $j=2qn+2n-1$. It follows that $\bar g_i=\bar g_j=(u,v)$ in this case as well. Therefore, every element of $H \times \mathbb{Z}_m$ appears at least twice, hence exactly twice, in ${\bf \bar g}$, and so $\bf g$ is a D-sequence in $H \times \mathbb{Z}_m$. Moreover, the last term of ${\bf g}$ is $g_{2mn-1}=(1_H,0)=1_G$, and so ${\bf g}$ is a cyclic D-sequence lift of ${\bf h}$. 
\end{proof}

\begin{example}\label{exz5xz3}
By using the D-sequence in $\mathbb{Z}_5$ from Example \ref{exz5} and following the construction in Lemma \ref{extodd}, one obtains the following D-sequence in $\mathbb{Z}_5 \times \mathbb{Z}_3$. For ease of reading, we denote $(a,b) \in \mathbb{Z}_5 \times \mathbb{Z}_3$ by ${b \atop a}$.  
$${0 \atop 0}, {1 \atop 4}, {0 \atop 1}, {1 \atop 2}, {0 \atop 1}, {1 \atop 3}, {0 \atop 3}, {1 \atop 4}, {0 \atop 2}, {1 \atop 0}, {2 \atop 0}, {2 \atop 4}, {2 \atop 1}, {2 \atop 2}, {2 \atop 1}, {2 \atop 3}, {2 \atop 3}, {2 \atop 4}, {2 \atop 2}, {2 \atop 0}, {1 \atop 0}, {0 \atop 4}, {1 \atop 1}, {0 \atop 2}, {1 \atop 1}, {0 \atop 3}, {1 \atop 3}, {0 \atop 4}, {1 \atop 2}, {0 \atop 0}.$$
\end{example}

Next, we consider the case of lifts by $\mathbb{Z}_{2^n}$, where $n\geq 2$. Given permutations $\alpha$ and $\beta$ of $\mathbb{Z}_k$, we write $\alpha \perp \beta$ if the multilist 
$$[\alpha(i)-i, 0\leq i \leq k-1] \cup [\beta(i)-i, 0\leq i \leq k-1]$$
contains every element of $\mathbb{Z}_k$ exactly twice. In other words, $\alpha \perp \beta$ if for every $v\in \mathbb{Z}_k$ there exist $i,j \in \{0,\ldots, k-1\}$ such that exactly one of the following occurs:
\begin{itemize}
\item[i)] $i \neq j$ and $\alpha(i)-i=\alpha(j)-j=v$;
\item[ii)] $i \neq j$ and $\beta(i)-i=\beta(j)-j=v$;
\item[iii)] $\alpha(i)-i=\beta(j)-j=v$. 
\end{itemize}

By a {\it cyclic} permutation, we mean a permutation consisting of one nontrivial cycle (with no fixed points). We denote the inverse of a permutation $\mu$ by $\mu^{-1}$. First, we construct three special permutations of $\mathbb{Z}_{2^n}$. 

\begin{lemma}\label{lemtec}
If $n\geq 2$, then there exist permutations $\alpha,\beta,\gamma: \mathbb{Z}_{2^n} \rightarrow \mathbb{Z}_{2^n}$ such that
\begin{itemize}
\item[i)] $\alpha \perp \alpha^{-1}$;
\item[ii)] $\beta \perp \gamma$;
\item[iii)] Both $ \gamma \beta$ and $ \gamma \alpha^{-1}\beta\alpha$ are cyclic permutations. 
\end{itemize}
\end{lemma}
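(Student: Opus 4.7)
My plan is to attempt a direct constructive proof, exhibiting specific permutations $X$, $B$, $D$ of $\mathbb{Z}_{2^n}$ and then verifying the three properties one by one. For $n=2$ a solution can be found by inspection (for instance $X=(1\,2\,3)$, $B=(0\,1)(2\,3)$, $D=(0\,2)$ satisfies all three conditions); for general $n$ I would either give a closed-form formula or proceed by induction on $n$, lifting a solution on $\mathbb{Z}_{2^{n-1}}$ to one on $\mathbb{Z}_{2^n}$ by an appropriate doubling.

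For condition (i), I first observe that if $d_i=X(i)-i$, then the consecutive differences of $X^{-1}$ form the multiset $\{-d_i\}_i$, because setting $j=X^{-1}(i)$ gives $X^{-1}(i)-i=j-X(j)=-(X(j)-j)$. Hence $X\perp X^{-1}$ is equivalent to the multiset $\{d_i\}_i\cup\{-d_i\}_i$ covering $\mathbb{Z}_{2^n}$ with multiplicity exactly two. Writing $a_v=\#\{i:d_i=v\}$, this forces $a_0=a_{2^{n-1}}=1$ and $a_v+a_{-v}=2$ for every $v\notin\{0,2^{n-1}\}$. My plan is to produce an explicit $X$ with $X(0)=0$ realizing this displacement profile, likely by piecewise formulas on the halves $\{0,\ldots,2^{n-1}-1\}$ and $\{2^{n-1},\ldots,2^n-1\}$ with a careful adjustment near the boundary to ensure $X$ is a bijection.

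For (ii) and (iii), once $X$ is fixed the natural move is to set $D=\sigma B^{-1}$ for a chosen $2^n$-cycle $\sigma$ (for instance $\sigma(i)=i+1$), so that $DB=\sigma$ is a single cycle automatically. The remaining requirements then become (a) the orthogonality $B\perp \sigma B^{-1}$, i.e.\ the displacements $B(i)-i$ and $B^{-1}(i)+1-i$ together cover $\mathbb{Z}_{2^n}$ exactly twice, and (b) $\sigma B^{-1} X^{-1} B X$ is again a single $2^n$-cycle. I would try to realize both by taking $B$ to be an involution or a product of transpositions arranged in blocks aligned with the orbits of $X$, so that the action of $D X^{-1} B X$ can be tracked cycle by cycle.

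The main obstacle I anticipate is reconciling (a) and (b) simultaneously: the orthogonality pushes $B$ toward a balanced displacement vector, while the conjugated cycle condition on $DX^{-1}BX$ is delicately sensitive to the cycle structure of $B$ relative to $X$. The bulk of the proof likely consists of verifying (b) after explicit formulas for $X$ and $B$ are specified, possibly via an inductive doubling argument in which the subtle part is showing that the doubling step preserves the single-cycle nature of the twisted product $DX^{-1}BX$ rather than just that of $DB$.
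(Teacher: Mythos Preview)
Your base case $n=2$ is correct, and your preliminary reduction of condition~(i) to the displacement-multiset criterion $a_0=a_{2^{n-1}}=1$, $a_v+a_{-v}=2$ is sound. The idea of writing $D=\sigma B^{-1}$ so that $DB=\sigma$ is a cycle for free is also reasonable. But from that point on the proposal is only a plan: you do not actually specify $X$ or $B$ for $n\ge 3$, and the vague ``inductive doubling'' you sketch has no concrete mechanism for preserving the single-cycle structure of $DX^{-1}BX$ under the lift. As you yourself note, reconciling the orthogonality $B\perp D$ with the conjugated cycle condition is the entire content of the lemma, and nothing in the proposal addresses it.

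The paper proceeds by a direct (not inductive) construction, and the missing idea is this: take $B$ and $D$ to be \emph{affine involutions} $B(i)=2^{n-2}-1-i$ and $D(i)=-i$. Then $B\perp D$ is immediate (the displacement list of $B$ is the odd residues, each twice; that of $D$ is the even residues, each twice), and $DB(i)=i+1-2^{n-2}$ is translation by an odd element, hence a single $2^n$-cycle. The crucial further trick is to build $X$ so that it satisfies the symmetry
\[
X(2^n-1-i)\;=\;2^{n-2}-1-X(i)\;=\;B\bigl(X(i)\bigr),
\]
which forces $X^{-1}BX(i)=-i-1$ and therefore $DX^{-1}BX(i)=i+1$, again a translation and hence a single cycle. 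With this symmetry imposed, one only needs to define $X$ on the lower half $\{0,\dots,2^{n-1}-1\}$ (the paper uses a piecewise-affine map of slope~$2$ there) and then check $X\perp X^{-1}$ via your displacement criterion. Your framework of ``choose $B$ an involution and hope the conjugate is a cycle'' is pointed in the right direction, but without the affine choice of $B,D$ and the companion symmetry constraint on $X$ there is no reason to expect $DX^{-1}BX$ to be a single cycle, and your outline gives no tool for forcing it.
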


\begin{proof}
If $n=2$, let $\beta(i)=1-i$ and $\gamma(i)=-i$ for $0\leq i \leq 3$, and $\alpha: 0 \mapsto 1, 1 \mapsto 3, 2 \mapsto 2, 3 \mapsto 0$. It is straightforward to verify the conditions (i)-(iii) in this case. Thus, suppose that $n>2$. Let $\beta(i)=2^{n-2}-1-i$ and $\gamma(i)=-i$ for $0\leq i \leq 2^n-1$. It is again straightforward to see that $\beta$ and $\gamma$ are permutations and $\beta \perp \gamma$. Next, we define
\begin{equation}\label{defxi}
\alpha(i)=\begin{cases} 2i+1 & \mbox{if $0\leq i \leq 2^{n-3}-1$}, \\
          2i+2^{n-2}+1    & \mbox{if $2^{n-3} \leq i \leq 3 \times 2^{n-3}-1$}, \\
       2i+ 2^{n-1}+1   & \mbox{if $3 \times 2^{n-3} \leq i \leq 2^{n-1}-1$},\\
       -\alpha(2^n-i-1)+2^{n-2}-1& \mbox{if $2^{n-1} \leq i \leq 2^n-1$}.
\end{cases}
\end{equation}
To show that $\alpha$ is a permutation, we need to show that $\alpha$ is onto. First, let $k$ be an odd number such that $0\leq k \leq 2^n-1$. If $1\leq k \leq 2^{n-2}-1$, then $\alpha((k-1)/2)=k$. If $2^{n-2}+1 \leq k \leq 2^{n-1}-1$, then $\alpha((k-1+2^{n-1})/2)=k$. Finally, if $2^{n-1}+1 \leq k \leq 2^n-1$, then $\alpha((k-1-2^{n-2})/2)=k$. It follows that every odd number modulo $2^n$ belongs to the image of $\alpha$. Next, let $k\in \{0,\ldots, 2^n-1\}$ be an even number. Then $2^{n-2}-1-k$ modulo $2^n$ equals an odd number $k’$ in the set $\{0,\ldots,2^n-1\}$. Therefore, there exists $0\leq i \leq 2^n-1$ such that $\alpha(i)=k’$, and so $\alpha(2^n-i-1)=k$. We have shown that $\alpha$ is onto, hence it is a permutation of $\mathbb{Z}_{2^n}$.

To verify property (i), it is sufficient to show that for every $0\leq k \leq 2^{n-1}$ the equations $\alpha(i)-i=k$ and $j-\alpha(j)=k$ together have at least two solutions. If $0\leq k \leq 2^{n-3}-1$, then $\alpha(i)-i=k$ for $i=k-1$ (if $k=0$, let $i=2^{n-1}-1$), while $j-\alpha(j)=k$ for $j=2^{n-1}-k-1$. The equation $\alpha(i)-i=2^{n-3}$ has two solutions $i=2^{n-3}-1$ and $i=7 \times 2^{n-3}$. If $2^{n-3}+1 \leq k \leq 2^{n-2}-1$, then $\alpha(i)-i=k$ for $i=3 \times 2^{n-2}+k$, while $j-\alpha(j)=k$ for $j=2^n-k$. If $2^{n-2} \leq k \leq 3 \times 2^{n-3}-1$, then $\alpha(i)-i=k$ for $i=k+2^{n-2}$, while $j-\alpha(j)=k$ for $j=2^n-k$. If $k=3 \times 2^{n-3}$, then $j-\alpha(j)=k$ has two solutions $j=3 \times 2^{n-3}-1$ and $j=5 \times 2^{n-3}$. If $3\times 2^{n-3}+1 \leq k \leq 2^{n-1}$, then $\alpha(i)-i=k$ for $i=k-2^{n-2}-1$, while $j-\alpha(j)=k$ for $j=3\times 2^{n-2}-1-k$. 

To verify property (iii), note that $\gamma\beta(i)=\gamma(2^{n-2}-1-i)=i+1-2^{n-2}$, which is a cyclic permutation modulo $2^{n}$, since $1-2^{n-2}$ is odd (here we are using the assumption that $n>2$). Moreover, we have
$$\gamma\alpha^{-1}\beta\alpha(i)=\gamma\alpha^{-1}(2^{n-2}-1-\alpha(i))=\gamma(-i-1)=i+1,$$
modulo $2^n$, by the last line in equation \eqref{defxi}, hence $\gamma\alpha^{-1}\beta\alpha$ is a cyclic permutation. This completes the proof of Lemma \ref{lemtec}.
\end{proof}

Given a sequence of permutations $\mu_j$, $i\leq j \leq k$, we use the notation $\prod_{j=i}^k \mu_j$ to denote the product $\mu_k \mu_{k-1} \cdots \mu_i$. 

\begin{lemma}\label{defmui}
Let $m,n\geq 2$ and $\sigma: \{0,\ldots, 2m-1\} \rightarrow \{0,\ldots, 2m-1\}$ be an involution i.e., $\sigma(\sigma(i))=i$ for all $0\leq i \leq 2m-1$. Suppose that $\sigma(i) \neq i$ for all $0\leq i \leq 2m-1$, and $\sigma(0) \neq 1, 2m-1$. Then there exist permutations $\mu_i:  \mathbb{Z}_{2^n} \rightarrow \mathbb{Z}_{2^n}$, $0\leq i \leq 2m-1$, with the following properties:
\begin{itemize}
\item[i)] $\mu_i \perp \mu_{\sigma(i)}$ for all $i=0,\ldots, 2m-1$; 
\item[ii)] $\prod_{j=1}^{2m} \mu_j=\mu_{2m}\cdots \mu_{1}$ is a cyclic permutation, where $\mu_{2m}=\mu_0$.  
\end{itemize}
\end{lemma}

\begin{proof}
Let $\alpha, \beta, \gamma$, be the permutations constructed in Lemma \ref{lemtec}, and let $\mu_{2m}=\mu_0=\gamma$ and $\mu_{\sigma(0)}=\beta$. Also, let
\begin{align}\nonumber
S & = \{i: 0< i<\sigma(0)<\sigma(i) < 2m\}.
\end{align}
Next, choose any subset $S’\subseteq S$ of size $\lceil |S|/2 \rceil$, where $\lceil x \rceil$ denotes the least integer greater than or equal to $x$. If $i \in S \cup \sigma(S)$, we let
$$\mu_i=\begin{cases} 
\alpha & \mbox{if $i \in S’$ or $\sigma(i) \in S \backslash S’$}; \\
\alpha^{-1} & \mbox{if $i  \in S \backslash S’$ or $\sigma(i) \in S’$.}
\end{cases}
$$
If $i \notin S \cup \sigma(S) \cup \{0,\sigma(0)\}$, then let one of $\mu_i$ and $\mu_{\sigma(i)}$ equal $\alpha$, and let the other equal $\alpha^{-1}$. This completes the definition of $\mu_i$ for all $0\leq i \leq 2m-1$. 

By properties (i) and (ii) in Lemma \ref{lemtec}, we have $\mu_i \perp \mu_{\sigma(i)}$ for all $0\leq i \leq 2m-1$. Our choice of $S'$ guarantees that $\prod_{j=1}^{\sigma(0)-1} \mu_j$ equals the identity permutation if $|S|$ is even, and it equals $\alpha$ if $|S|$ is odd. Similarly, $\prod_{j=\sigma(0)+1}^{2m-1} \mu_j$ equals the identity permutation if $|S|$ is even, and it equals $\alpha^{-1}$ if $|S|$ is even. Therefore, $\mu_{2m}\mu_{2m-1}\cdots \mu_1=\gamma \alpha^{-1}\beta \alpha$ if $|S|$ is odd, and $\mu_{2m}\mu_{2m-1}\cdots \mu_1=\gamma\beta$ if $|S|$ is even, hence it is a cyclic permutation in either case by Lemma \ref{lemtec}. \end{proof}

\begin{example}\label{m4n2-1} In this example, we construct $\mu_i$, $0\leq i \leq 7$, where $m=4$ and $n\geq 2$. Let $\sigma$ be an involution of $\{0,\ldots, 7\}$ defined by $0 \leftrightarrow 4, 1 \leftrightarrow 5, 2 \leftrightarrow 6$, and $3 \leftrightarrow 7$. Then, we have $S=\{1,2,3\}$, and we choose $S^\prime=\{1,2\}$. Following the construction given in Lemma \ref{defmui}, we let $\mu_8=\mu_0=\gamma$, $\mu_4=\beta$, $\mu_1=\mu_2=\mu_7=\alpha$, and $\mu_3=\mu_5=\mu_6=\alpha^{-1}$. Then $\mu_8\mu_7\cdots \mu_1=\gamma \alpha \alpha^{-1} \alpha^{-1} \beta \alpha^{-1}\alpha \alpha =\gamma \alpha^{-1} \beta \alpha$, which is a cyclic permutation by Lemma \ref{lemtec}.
\end{example}

We define $\mu_i$ for arbitrary integer $i\geq 0$ by letting $\mu_i=\mu_{i_0}$, where $i_0 \in \{0,\ldots, 2m-1\}$ such that $i=i_0 \pmod{2m}$. Moreover, let $\hat \mu_0$ be the identity permutation and 
$$\hat \mu_i= \prod_{j=1}^i \mu_j=\mu_i \cdots \mu_1,~i\geq 1.$$

\begin{lemma}\label{muidiff}
Let $\sigma$ and $\mu_i$, $i\geq 0$, be as in Lemma \ref{defmui}. Then, for any $i\geq 0$, the terms $\hat \mu_{i+2km}(0)$, $0\leq k \leq 2^n-1$, are all distinct. 
\end{lemma}

\begin{proof}
Let $i \geq 0$ and $i_0 \in \{0,\ldots, 2m-1\}$ such that $i=i_0 \pmod{2m}$, and note that
\begin{align}\nonumber
\prod_{j=i+1}^{i+2m} \mu_j & =\prod_{j=i_0+1}^{i_0+2m} \mu_j=
\prod_{j=1+2m}^{i_0+2m} \mu_j \prod_{j=i_0+1}^{2m}\mu_j \\ \nonumber
 &= \prod_{j=1}^{i_0} \mu_j \prod_{j=i_0+1}^{2m}\mu_j = \left (\prod_{j=i_0+1}^{2m}\mu_j \right )^{-1} \prod_{j=1}^{2m} \mu_j \left (\prod_{j=i_0+1}^{2m}\mu_j \right ).
\end{align}
Since $\prod_{j=1}^{2m} \mu_j$ is a cyclic permutation by Lemma \ref{defmui}, its conjugates are also cyclic permutations, hence $\prod_{j=i+1}^{i+2m} \mu_j$ is a cyclic permutation. For $i\geq 0$, one has
$$\hat \mu_{i+2(k+1)m}=\prod_{j=1}^{i+2(k+1)m} \mu_j=\prod_{j=i+2km+1}^{i+2(k+1)m} \mu_j \prod_{j=1}^{i+2km}\mu_j=\prod_{j=i+1}^{i+2m} \mu_j \prod_{j=1}^{i+2km}\mu_j=\left ( \prod_{j=i+1}^{i+2m} \mu_j \right ) \hat \mu_{i+2km}.$$
Since $\prod_{j=i+1}^{i+2m} \mu_j$ is a cyclic permutation and it maps $\hat \mu_{i+2km}(0)$ to $\hat \mu_{i+2(k+1)m}(0)$, $0\leq k \leq 2^n-1$, it follows that the terms $\hat \mu_{i+2km}(0)$, $0\leq k \leq 2^n-1$, are all distinct. 
\end{proof}

The next example makes the proof of the the next lemma clearer. 

\begin{example}\label{m4n2-2}
We can now find a lift of the D-sequence ${\bf h}: 0,1,3,2,2,3,1,0$ in $H=\mathbb{Z}_4$ to a D-sequence in $\mathbb{Z}_4 \times \mathbb{Z}_4$. One has ${\bf \bar h}: 0, 1, 2, 3, 0, 1, 2, 3$, and so the involution $\sigma$ is the same as the one given in Example \ref{m4n2-1}, where we defined $\mu_i$, $0\leq i \leq 7$. We have listed the elements of the sequence $\hat \mu_{i+8k}(0)$, $0\leq k \leq 3$, $0\leq i \leq 7$, in the table below. 
\begin{table}[!h]
\centering 
\begin{tabular}{c|ccccccccccccccc|cc}\label{tab1}
& $0$ & & $1$ & & $2$  & & 3 && 4 && 5 && 6 && $7$ && \\
\hline
0 & 0 & $\xrightarrow{\mu_1}$ & 1 & $\xrightarrow{\mu_2}$ & 3& $\xrightarrow{\mu_3}$ &1 &$\xrightarrow{\mu_4}$ &0 &$\xrightarrow{\mu_5}$ & 3 &$\xrightarrow{\mu_6}$ & 1 & $\xrightarrow{\mu_7}$ & 3 &$\xrightarrow{\mu_8}$ & 1\\
1& 1 &$\xrightarrow{\mu_1}$ & 3  &$\xrightarrow{\mu_2}$ & 0& $\xrightarrow{\mu_3}$& 3 &$\xrightarrow{\mu_4}$ & 2 &$\xrightarrow{\mu_5}$ & 2 & $\xrightarrow{\mu_6}$ &2  & $\xrightarrow{\mu_7}$ & 2 &$\xrightarrow{\mu_8}$ & 2\\
2& 2 & $\xrightarrow{\mu_1}$ & 2  &$\xrightarrow{\mu_2}$ &2 &$\xrightarrow{\mu_3}$ & 2 &$\xrightarrow{\mu_4}$ & 3 & $\xrightarrow{\mu_5}$& 1 &  $\xrightarrow{\mu_6}$ &0 & $\xrightarrow{\mu_7}$ & 1 &$\xrightarrow{\mu_8}$ & 3\\
3& 3 & $\xrightarrow{\mu_1}$ & 0   & $\xrightarrow{\mu_2}$&1 &$\xrightarrow{\mu_3}$& 0 &$\xrightarrow{\mu_4}$& 1 &$\xrightarrow{\mu_5}$&0 & $\xrightarrow{\mu_6}$ &3 & $\xrightarrow{\mu_7}$ &  0 &$\xrightarrow{\mu_8}$ & 0\\
\end{tabular} 
\caption{The term $\hat \mu_{i+2mk}(0)$ is given by the number on row $k$ and column $i$. }
\end{table}
We denote an element $(h,t) \in H \times \mathbb{Z}_4$ by ${t \atop h}$ and refer to $h$ and $t$ as the bottom and top components. The idea is to write four copies of the sequence ${\bf h}$ as the bottom components. The top components of the first eight terms are given by the first row of the table above, the top components of the second group are given by the second row, and so on. Following this algorithm, one obtains the following D-sequence lift of ${\bf h}$ in $H\times \mathbb{Z}_4$. 
$$\overbrace{\underbrace{{0 \atop 0}, {1 \atop 1}, {3 \atop 3}, {1 \atop 2}, {0 \atop 2}, {3 \atop 3}, {1 \atop 1}, {3 \atop 0}}_\text{first copy}}^\text{first row}, \overbrace{\underbrace{{1\atop 0}, {3 \atop 1}, {0 \atop 3}, {3 \atop 2}, {2 \atop 2}, {2 \atop 3}, {2 \atop 1}, {2 \atop 0}}_\text{second copy}}^\text{second row}, \overbrace{\underbrace{{2 \atop 0}, {2 \atop 1}, {2 \atop 3}, {2 \atop 2}, {3 \atop 2}, {1 \atop 3}, {0 \atop 1}, {1 \atop 0}}_\text{third copy}}^\text{third row}, \overbrace{\underbrace{{3 \atop 0}, {0 \atop 1}, {1 \atop 3}, {0 \atop 2}, {1 \atop 2}, {0 \atop 3}, {3 \atop 1},  {0 \atop 0}}_\text{fourth copy}}^\text{fourth row}.$$
\end{example}

We give the complete description of the algorithm to construct lifts of cyclic D-sequences by $\mathbb{Z}_{2^n}$, $n\geq 2$, in the next lemma. 

\begin{lemma}\label{extpower2}
 Every cyclic D-sequence in $H$ has a cyclic D-sequence lift in $H \times \mathbb{Z}_{2^n}$, where $n\geq 2$. \end{lemma}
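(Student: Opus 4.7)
The plan is to construct a cyclic D-sequence lift ${\bf g}$ of the given cyclic D-sequence ${\bf h}: h_0, \ldots, h_{2N-1}$ of $H$ (writing $N=|H|$, with $h_0 = h_{2N-1} = 1_H$) in the form $g_{2tN + r} = (h_r, c_t(r))$ for $t \in \mathbb{Z}_{2^n}$ and $0 \leq r \leq 2N-1$. The second coordinates $c_t(r) \in \mathbb{Z}_{2^n}$ will be defined using the permutations $X, B, D$ of Lemma \ref{lemtec}. The block structure is forced by the lift condition, so all the freedom is in the $c_t(r)$.

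The construction uses a sequence of ``base values'' $a_t \in \mathbb{Z}_{2^n}$ defined by iterating one of the cyclic permutations from property (iii), say $a_{t+1} = (DB)(a_t)$; because $DB$ has order $2^n$, the values $a_0, \ldots, a_{2^n-1}$ exhaust $\mathbb{Z}_{2^n}$ regardless of the starting point. Within block $t$, most positions would be assigned second coordinates alternating between $a_t$ and $X(a_t)$, so that within-block quotients of ${\bf \bar g}$ have second coordinate $\pm(X(a_t) - a_t)$; this is where property (i), $X \perp X^{-1}$, will supply a double cover. A few distinguished positions---for instance the first and last of each block---would take values of the form $B(a_t)$ and $D(a_t)$, producing boundary quotients that invoke property (ii), $B \perp D$.

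Verification breaks into three parts. Showing every $(h, v) \in G$ appears twice in ${\bf g}$ is a counting argument, using that $h$ appears twice in ${\bf h}$ and that each of $t \mapsto a_t, X(a_t), B(a_t), D(a_t)$ is a permutation of $\mathbb{Z}_{2^n}$. Cyclicity $g_{2 \cdot 2^n \cdot N - 1} = 1_G$ follows from the period-$2^n$ behavior of $DB$ provided $a_0$ is chosen so that the terminal second coordinate $c_{2^n-1}(2N-1)$ equals $0$. The substantive step is showing that every $(u, v) \in G$ appears twice in ${\bf \bar g}$: for each $u \in H$ and the two positions $r_1, r_2$ in ${\bf \bar h}$ with $\bar h_{r_i} = u$, the multiset of second coordinates of the corresponding entries of ${\bf \bar g}$ across all $2^n$ blocks (of total size $2\cdot 2^n$) must cover $\mathbb{Z}_{2^n}$ exactly twice.

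The main obstacle is ensuring this last double-cover condition for every possible pair $(r_1, r_2)$, since ${\bf h}$ is arbitrary and a repeated value of ${\bf \bar h}$ may occur at any two positions of any parities. The within-block pattern must therefore be designed so that, for every combination of position ``types'' (first position, last position, interior even, interior odd), the resulting union of multisets drawn from $\{X(s)-s\}_s$, $\{-(X(s)-s)\}_s$, $\{B(s)-s\}_s$, $\{D(s)-s\}_s$ (up to shifts coming from the block-boundary quotients, whose values $a_{t+1} - c_t(2N-1)$ themselves cycle through $\mathbb{Z}_{2^n}$ by property (iii)) aligns with one of the perpendicularity relations (i) or (ii). A case analysis by position type completes the argument; the case $u = 1_H$, in which $r_1 = 0$ by the convention $\bar h_0 = 1_H$, is handled separately by collecting the block-boundary quotients together with the within-block contribution at the unique other $r^*$ with $\bar h_{r^*} = 1_H$, and invoking property (iii) to see that those boundary values alone already cover $\mathbb{Z}_{2^n}$ once.
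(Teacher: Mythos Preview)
Your outline has the right architecture---assign a permutation-based second coordinate and exploit the relations $X\perp X^{-1}$ and $B\perp D$---but the specific proposal of a \emph{fixed} within-block pattern, alternating between $a_t$ and $X(a_t)$ according to the parity of $r$, does not work. With that scheme the second coordinate of $\bar g_{2tN+r}$ at an interior position depends only on the parity of $r$: it is $X(a_t)-a_t$ at one parity and $-(X(a_t)-a_t)$ at the other. Now take any $u\in H$ whose two occurrences $\bar h_{r_1}=\bar h_{r_2}=u$ happen to lie at interior positions of the \emph{same} parity (nothing in a cyclic D-sequence prevents this). The multiset of second coordinates of $\bar g$ at those positions, over all $2^n$ blocks, is then two copies of $\{X(s)-s:s\in\mathbb{Z}_{2^n}\}$ (or two copies of its negation). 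For this to double-cover $\mathbb{Z}_{2^n}$ you would need $s\mapsto X(s)-s$ to be a bijection, which $X\perp X^{-1}$ does not assert; in the paper's $n=2$ example $X$ has difference sequence $1,2,0,1$, so the value $3$ is missed entirely. Your ``case analysis by position type'' therefore cannot close, because the type pair (interior even, interior even)---and likewise (interior odd, interior odd)---does not match either perpendicularity relation.

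The paper's proof avoids exactly this by \emph{not} fixing the pattern by parity. Instead it looks at the pairing $\sigma$ on $\{0,\ldots,2m-1\}$ defined by $\bar h_i=\bar h_{\sigma(i)}$ and assigns to each position $i$ a permutation $P_i\in\{X,X^{-1},B,D\}$ chosen so that $P_i\perp P_{\sigma(i)}$ for every $i$ (with $P_0=D$, $P_{\sigma(0)}=B$, and the remaining positions receiving $X$ or $X^{-1}$ according to the sets $S,S',T,T'$). The second coordinate is then the running product $t_k=P_k\cdots P_1(0)$. The delicate part is that this tailoring must be done while also forcing the full-period product $\hat P=P_{2m}\cdots P_1$ to equal $DB$ or $DX^{-1}BX$, so that Lemma~\ref{lemtec}(iii) makes $\hat P$ cyclic and hence each residue class $\{t_{i+2km}:k\}$ exhausts $\mathbb{Z}_{2^n}$. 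That bookkeeping (balancing the counts of $X$ versus $X^{-1}$ on each side of $\sigma(0)$) is the missing idea in your sketch.
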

 
 \begin{proof}
 Let $|H|=m$ and ${\bf h}: h_0,\ldots, h_{2m-1}$, be a cyclic D-sequence in $H$. Let $\sigma: \{0,\ldots, 2m-1\} \rightarrow \{0,\ldots, 2m-1\}$ be the involution defined by $\sigma(i) \neq i$ and $\bar h_{i}=\bar h_{\sigma(i)}$ for all $0\leq i \leq 2m-1$. Since $h_0=h_{2m-1}=1_H$, one has $\sigma(0) \neq 1, 2m-1$. By Lemma \ref{defmui}, there exists permutations $\mu_i: \mathbb{Z}_{2^n} \rightarrow \mathbb{Z}_{2^n}$, $i \geq 0$, such that conditions (i)-(ii) in Lemma \ref{defmui} hold. Given $j\geq 0$ with $j=i \pmod {2m}$ and $i\in \{0,\ldots, 2m-1\}$, we let $h_j=h_i$. We now define the sequence ${\bf g}: g_0,\ldots, g_{2^{n+1}m-1}$, in $G=H \times \mathbb{Z}_{2^n}$, and the sequence ${\bf t}: t_0, \ldots, t_{2^{n+1}m-1}$, in $\mathbb{Z}_{2^n}$, by letting $g_0=1_G$, $t_0=0$, and
$$t_i=\hat \mu_i(0);~ g_i=(h_i,t_i),$$   
for all $0\leq i \leq 2^{n+1}m-1$. Also, let ${\bf \bar t}$ be the sequence of consecutive differences of ${\bf t}$ so that $\bar g_i=(\bar h_i, \bar t_i)$ for all $0\leq i \leq 2^{n+1}m-1$. 

We first show that every element of $G$ appears exactly twice in ${\bf g}$. For $0\leq i \leq 2m-1$, one has 
$$g_{i+2km}=(h_{i+2km},t_{i+2km})=(h_i,\hat \mu_{i+2km}(0)),$$
hence, by Lemma \ref{muidiff}, we have $\{g_{i+2km}: 0\leq k \leq 2^n-1\}=\{h_i\} \times \mathbb{Z}_{2^n}$. Since each element of $H$ appears exactly twice in ${\bf h}$, it follows that each element of $G$ appears exactly twice in ${\bf g}$. 
%%%%%

Next, we show that every element $(u,v)\in G$ is listed twice in $\bf \bar g$, where $u\in H$ and $v \in \mathbb{Z}_{2^n}$ . Since ${\bf h}$ is a D-sequence in $H$, there exists $l \in \{0,\ldots, 2m-1\}$ such that $\bar h_l=\bar h_{\sigma(l)}=u$. Also, since $\mu_l \perp \mu_{\sigma(l)}$ by Lemma \ref{defmui}, there exist $i,j \in \{0,\ldots, 2^n-1\}$ such that exactly one of the following occurs:
\begin{itemize}
\item[i)] $i \neq j$ and $\mu_l(i)-i=\mu_l(j)-j=v$;
\item[ii)] $i \neq j$ and $\mu_{\sigma(l)}(i)-i=\mu_{\sigma(l)}(j)-j=v$;
\item[iii)] $\mu_l(i)-i=\mu_{\sigma(l)}(j)-j=v$. 
\end{itemize}
Suppose that case (i) occurs. Since the terms $t_{l-1+2km}=\hat \mu_{l-1+2km}(0)$, $0\leq k \leq 2^n-1$, are all distinct by Lemma \ref{muidiff}, there exist distinct values $r,s\in \{0,\ldots, 2^n-1\}$ such that $t_{l-1+2rm}=i$ and $t_{l-1+2sm}=j$. Then 
\begin{align}\nonumber
\bar g_{l+2rm} &=(\bar h_{l+2rm}, \bar t_{l+2rm}) \\ \nonumber
& = (\bar h_l, -t_{l-1+2rm}+t_{l+2rm}) \\ \nonumber
& =(u,-t_{l-1+2rm}+\mu_{l+2rm}(t_{l-1+2rm})) \\ \nonumber
&=(u,-i+\mu_l(i))=(u,v),
\end{align}
and similarly $\bar g_{i+2sm}=(u,v)$. Since $i+2rm \neq i+2sm$, we conclude that $(u,v)$ is listed in ${\bf \bar g}$ at least twice. It follows similarly in case (ii) that $(u,v)$ appears twice in ${\bf \bar g}$. Thus, suppose that case (iii) occurs. There exist $r,s \in \{0,\ldots, 2^n-1\}$ such that $t_{l-1+2rm}=i$ and $t_{\sigma(l)-1+2sm}=j$. It then follows that $\bar g_{l+2rm}=(\bar h_l, -i+\mu_l(i))=(u,v)$ and $\bar g_{\sigma(l)+2sm}=(\bar h_{\sigma(l)},-j+\mu_{\sigma(l)}(j))=(u,v)$. We claim that $l+2rm \neq \sigma(l)+2sm$. On the contrary, suppose that $l+2rm = \sigma(l)+2sm$ and so $l-\sigma(l) = 0 \pmod {2m}$. Since $l, \sigma(l) \in \{0,\ldots, 2m-1\}$, it follows that $l=\sigma(l)$, which is a contradiction. We conclude that $(u,v)$ appears at least twice in ${\bf \bar g}$ in this case as well. This completes the proof of ${\bf g}$ being a D-sequence in $G$. Finally, we note that ${\bf g}$ is cyclic, because $h_{2^{n+1}m-1}=h_{2m-1}=1_H$ and 
$$t_{2^{n+1}m-1}=\hat \mu_{2^{n+1}m-1}(0)=(\mu_{2m})^{-1}(\mu_{2m} \cdots \mu_1)^{2^n}(0)=0,$$
since $\mu_{2m}\cdots \mu_1$ is a permutation of $\mathbb{Z}_{2^n}$ and $\mu_{2m}(0)=\gamma(0)=0$. This completes the proof of Lemma \ref{extpower2}.
 \end{proof}

\section{The case of $(\mathbb{Z}_2)^k$}\label{lift2}

The constructions in Section \ref{lift} do not apply to the case of $\mathbb{Z}_2$. Although, it is probably true that every cyclic D-sequence has a cyclic D-sequence lift by $\mathbb{Z}_2$, we circumvent the issue by working with sequenceable groups. Anderson \cite{Anderson2} proved that if $N$ is an odd sequenceable group, then $N\times \mathbb{Z}_2$ is sequenceable. In the case of D-sequences, we prove the following lemma. 

%%%%%%%

\begin{lemma}\label{oddcase}
If $N$ is an odd sequenceable group, then $N \times (\mathbb{Z}_2)^k$ has a cyclic D-sequence, where $k \geq 0$. 
\end{lemma}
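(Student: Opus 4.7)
The proof splits into three cases according to the value of $k$. For $k=0$, since $N$ is sequenceable, Lemma \ref{sR} (applied in its sequenceable case, with $k=2$ there) immediately yields a cyclic D-sequence of $N$. For $k=1$, the group $N\times\mathbb{Z}_2$ is a solvable binary group distinct from $Q_8$: solvability of $N$ follows from the Feit-Thompson theorem, the only involution of $N\times\mathbb{Z}_2$ is $(1_N,1)$ because $|N|$ is odd, and $|N\times\mathbb{Z}_2|=2|N|\neq 8$. Hence $N\times\mathbb{Z}_2$ is sequenceable by \cite{AI1}, and Lemma \ref{sR} produces a cyclic D-sequence in it.

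For $k\geq 2$, the constructions of Section \ref{lift} do not cover $\mathbb{Z}_2$-lifts, so my plan is to construct the cyclic D-sequence directly from a sequencing of $N$ rather than by iterating lifts. Starting from a sequencing $h_0=1_N,h_1,\ldots,h_{n-1}$ of $N$ with partial products $s_0=1_N,\ldots,s_{n-1}$, I would assemble a sequence of length $2^{k+1}n$ in $N\times(\mathbb{Z}_2)^k$ by interleaving the ``doubled'' $N$-sequence $s_0,s_1,\ldots,s_{n-1},s_{n-1},\ldots,s_0$ (the cyclic D-sequence of $N$ coming from Lemma \ref{sR}) with a structured traversal of $(\mathbb{Z}_2)^k$---for instance a cyclic Gray code, or an R-sequencing of $(\mathbb{Z}_2)^k$, guaranteed by \cite{AKP} since $(\mathbb{Z}_2)^k$ is not sequenceable for $k\geq 2$. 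One then verifies that every element of $N\times(\mathbb{Z}_2)^k$ appears exactly twice in the sequence and exactly twice in the difference sequence, and that the sequence is cyclic.

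The main obstacle is the $k\geq 2$ case. Because every nontrivial element of $(\mathbb{Z}_2)^k$ is its own inverse, the self-inverse pairing $h\leftrightarrow h^{-1}$ that powered Lemma \ref{sR} degenerates in the $(\mathbb{Z}_2)^k$-factor: each $(\mathbb{Z}_2)^k$-difference must be produced twice from genuinely distinct position-pairs rather than from an element and its inverse. Arranging this while simultaneously controlling the $N$-coordinate frequencies demands a careful scheduling of positions across the $2^k$ blocks. The oddness of $|N|$, which makes inversion on $N\setminus\{1_N\}$ fixed-point free, provides the extra flexibility needed to align the $N$- and $(\mathbb{Z}_2)^k$-parts of the differences consistently.
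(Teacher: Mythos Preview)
Your treatment of $k=0$ matches the paper. For $k=1$ your argument via Feit--Thompson and the Anderson--Ihrig theorem on solvable binary groups is correct, though considerably heavier than necessary: the paper handles all $k\geq 1$ by a single explicit construction, and in any case Anderson \cite{Anderson2} already gives that $N\times\mathbb{Z}_2$ is sequenceable directly from a sequencing of odd $N$, without invoking Feit--Thompson.

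For $k\geq 2$ there is a genuine gap. You give only a plan (``I would assemble\ldots one then verifies\ldots'') and then devote a paragraph to explaining why the plan is hard; no construction is actually carried out. Moreover, the ingredients you propose for the $(\mathbb{Z}_2)^k$-coordinate are not the right ones. A Gray code yields consecutive differences that are all standard basis vectors, so most elements of $(\mathbb{Z}_2)^k$ never appear as differences; an R-sequencing hits each nonzero difference once, not twice. Neither interfaces cleanly with the doubled $N$-sequence to produce every element of $N\times(\mathbb{Z}_2)^k$ exactly twice among the quotients.

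The paper's construction is short and uniform over all $k\geq 1$: take any D-sequence $d_0,\ldots,d_{2^{k+1}-1}$ of $(\mathbb{Z}_2)^k$ (available by Theorem~\ref{abeliancase}), run through $2^{k+1}$ blocks of length $n$ with the $N$-coordinate going $h_0,\ldots,h_{n-1}$ in the even-$q$ blocks and $h_{n-1},\ldots,h_0$ in the odd-$q$ blocks, and within block $q$ set the second coordinate to $d_q$ at even positions and $d_{q+1}$ at odd positions. The point you were missing is that oddness of $n$ is used not to pair $h$ with $h^{-1}$ but simply to make the parity of the position flip at each block boundary, so that the second coordinate passes smoothly from $d_{q+1}$ at the end of block $q$ to $d_{q+1}$ at the start of block $q+1$; the $(\mathbb{Z}_2)^k$-differences then reproduce the differences $\bar d$ of the D-sequence $d$, each occurring $n$ times, exactly as needed.
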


\begin{proof}
When $k=0$, the claim follows from Lemma \ref{sR1}. Thus, suppose that $k\geq 1$. Let $h_0,\ldots, h_{n-1}$, be a sequence in $N$ such that every element of $N$ appears exactly once in each of ${\bf h}$ and ${\bf \bar h}$. Also, let $d_0,\ldots, d_{2^{k+1}-1}$, be a D-sequence in $(\mathbb{Z}_2)^k$ (which we know exists, since all abelian groups are D-sequenceable by Theorem \ref{abeliancase}) and let $d_{2^{k+1}}=d_0$ (hence $\bar d_{2^{k+1}}=0$). Denote an element $(h,d) \in N \times (\mathbb{Z}_2)^k$ by ${d \atop h}$. Then the sequence ${\bf p}$ below is a D-sequence in $N \times (\mathbb{Z}_2)^k$:
\begin{equation}\label{exodd2} {\bf p}:~
\overbrace{\underbrace{{d_0 \atop h_0}, {d_1 \atop h_1}, \ldots, {d_0 \atop h_{n-1}}}_\text{increasing index}}^\text{alternating $d_0$ and $d_1$}, \overbrace{\underbrace{ {d_1 \atop h_{n-1}}, \ldots, {d_2 \atop h_1}, {d_1 \atop h_0}}_\text{decreasing index}}^\text{alternating $d_1$ and $d_2$}, \ldots,\overbrace{\underbrace{{d_{2^{k+1}-1} \atop h_{n-1}}, \ldots, {d_0 \atop h_1}, {d_{2^{k+1}-1} \atop h_0}}_\text{decreasing index}}^\text{alternating $d_{2^{k+1}-1}$ and $d_0$}.
\end{equation}
To be more precise, for $0\leq i \leq 2^{k+1}n-1$, we write $i=qn+j$, where $0\leq q \leq 2^{k+1}-1$ and $0\leq j \leq n-1$, and let
$$p_i=\begin{cases}(h_j,d_q) & \mbox{if $q$ is even and $j$ is even;}\\
(h_j,d_{q+1}) & \mbox{if $q$ is even and $j$ is odd;}\\
(h_{n-j-1},d_q) & \mbox{if $q$ is odd and $j$ is even;}\\
(h_{n-j-1},d_{q+1}) & \mbox{if $q$ is odd and $j$ is odd.}
\end{cases}$$
Then the sequence of consecutive differences ${\bf \bar p}$ is given by
\begin{equation}\label{exfoddbar} {\bf \bar  p}:~
{{\bar d_0} \atop 0},\overbrace{\underbrace{ {{\bar d_1} \atop {\bar h_1}}, \ldots, {{\bar d_1} \atop {\bar h_{n-1}}}}_\text{increasing index}}^\text{all ${\bar d_1}$}, {{\bar d_1} \atop 0}, \overbrace{\underbrace{ {{\bar d_2} \atop {\bar h_{n-1}^{-1}}}, \ldots, {{\bar d_2} \atop {\bar h_2^{-1}}},{{\bar d_2} \atop {\bar h_1^{-1}}}}_\text{decreasing index/inverted}}^\text{all ${\bar d_2}$},{{\bar d_2} \atop 0}, \ldots, \overbrace{\underbrace{ {{\bar d_{2^{k+1}}} \atop {\bar h_{n-1}^{-1}}}, \ldots, {{\bar d_{2^{k+1}}} \atop {\bar h_2^{-1}}}, {{\bar d_{2^{k+1}}} \atop {\bar h_1^{-1}}}}_\text{decreasing index/inverted}}^\text{all ${\bar d_{2^{k+1}}=\bar d_0}$}.
\end{equation}
In computing $\bar p_i$, we have used the property in $(\mathbb{Z}_2)^k$ that $\bar d_i =-\bar d_i$. It follows that ${\bf p}: p_0,\ldots, p_{2^{k+1}n-1}$, is a cyclic D-sequence in $N \times (\mathbb{Z}_2)^k$. \end{proof}

The sequence \eqref{exodd2} cannot be formed if $n$ is even, and so we need a different approach for the even case. To follow the idea in \eqref{exodd2}, we need to find a sequence $c_0,\ldots, c_{2^{k+2}-1}$ in $(\mathbb{Z}_2)^k$ such that
\begin{equation}\label{exfinal} {\bf g}:~
\overbrace{\underbrace{{c_0 \atop h_0}, {c_1 \atop h_1}, \ldots, {c_1 \atop h_{n-1}}}_\text{increasing index}}^\text{alternating $c_0$ and $c_1$}, \overbrace{\underbrace{ {c_2 \atop h_{n-1}}, \ldots, {c_2 \atop h_1}, {c_3 \atop h_0}}_\text{decreasing index}}^\text{alternating $c_2$ and $c_3$}, \ldots,\overbrace{\underbrace{{c_{2^{k+2}-2} \atop h_{n-1}}, \ldots, {c_{2^{k+2}-2} \atop h_1}, {c_{2^{k+2}-1} \atop h_0}}_\text{decreasing index}}^\text{alternating $c_{2^{k+2}-2}$ and $c_{2^{k+2}-1}$},
\end{equation}
is a D-sequence. The sequence of consecutive quotients of ${\bf g}$ is given by
\begin{equation}\label{exfinal2} {\bf \bar  g}:~
{{\bar c_0} \atop 0},\overbrace{\underbrace{ {{\bar c_1} \atop {\bar h_1}}, \ldots, {{\bar c_1} \atop {\bar h_{n-1}}}}_\text{increasing index}}^\text{all ${\bar c_1}$}, {{\bar c_2} \atop 0}, \overbrace{\underbrace{ {{\bar c_3} \atop {h_{n-1}^{-1}}}, \ldots, {{\bar c_3} \atop {h_2^{-1}}}, {{\bar c_3} \atop {h_1^{-1}}}}_\text{decreasing index/inverted}}^\text{all ${\bar c_3}$},{{\bar c_4} \atop 0}, \ldots, \overbrace{\underbrace{ {{\bar c_{2^{k+2}-1}} \atop {h_{n-1}^{-1}}}, \ldots, {{\bar c_{2^{k+2}-1}} \atop {h_2^{-1}}}, {{\bar c_{2^{k+2}-1}} \atop {h_1^{-1}}}}_\text{decreasing index/inverted}}^\text{all ${\bar c_{2^{k+2}-1}}$}.
\end{equation}
Hence, in order for ${\bf g}$ to be a cyclic D-sequence in $N \times (\mathbb{Z}_2)^k$, the sequence $c_0,\ldots, c_{2^{k+2}-1}$ in $(\mathbb{Z}_2)^k$, must satisfy $c_0=c_{2^{k+2}-1}=0$ and have the following three properties: 
\begin{itemize}
\item[P1]: For each $0\leq i \leq 2^{k+2}-1$ with $i=0,3 \pmod 4$, there exists a unique $j \neq i$ with $j= 0, 3 \pmod 4$ such that $c_i=c_j$.

\item[P2]: For each $0\leq i \leq 2^{k+2}-1$ with $i=1,2 \pmod 4$, there exists a unique $j \neq i$ with $j= 1,2 \pmod 4$ such that $c_i= c_j$.

\item[P3]: For each $0\leq i \leq 2^{k+2}-1$, there exists a unique $j \neq i$ with $j= i \pmod 2$ such that $\bar c_i=\bar c_j$.
\end{itemize}

%First suppose that $k=1$. Let ${\bf h}: h_0,\ldots, h_{n-1}$ be a sequence in $N$ such that every element of $N$ appears exactly once in each of ${\bf h}$ and ${\bf \bar h}$. We denote an element $(a,b) \in N \times \mathbb{Z}_2$ by ${b \atop a}$. Then, the following sequence is a cyclic D-sequence in $N \times \mathbb{Z}_2$:
%\begin{equation}\label{ex3}
%\overbrace{\underbrace{{0 \atop h_0}, {0 \atop h_1}, \ldots, {0 \atop h_{n-1}}}_\text{increasing index}}^\text{all 0}, \overbrace{\underbrace{ {1 \atop h_{n-1}}, \ldots, {1 \atop h_1}, {1 \atop h_0}}_\text{decreasing index}}^\text{all 1}, \overbrace{\underbrace{{1 \atop h_0}, {0 \atop {h}_1}, \ldots, {0 \atop h_{n-1}}}_\text{increasing index}}^\text{alternating 1 and 0},\overbrace{\underbrace{{1 \atop h_{n-1}}, \ldots, {1 \atop h_1}, {0 \atop h_0}}_\text{decreasing index}}^\text{alternating 1 and 0}.
%\end{equation}

In the following lemma, we prove that a sequence satisfying the properties P1, P2, and P3 exists for all $k\geq 1$.

\begin{lemma}\label{2filter}
For every $k\geq 1$, there exists a sequence $c_0,\ldots, c_{2^{k+2}-1},$ in the group $(\mathbb{Z}_2)^k$ with $c_0=c_{2^{k+2}-1}=0$ such that properties P1, P2, and P3 hold. 
\end{lemma}

\begin{proof}
The sequence below satisfies the properties P1, P2, and P3 when $k=1$:
$${\bf c}: 0, 0, 1,1, 1, 0, 1, 0.$$
By Induction on $k\geq 2$, we prove that a sequence $c_0,\ldots, c_{2^{k+2}-1},$ in $(\mathbb{Z}_2)^k$ exists that satisfies the following stronger conditions ($0\leq i,j \leq 2^{k+2}-1$):
\begin{itemize}
\item[i)] for each $i = 0,7 \pmod 8$, there exists a unique $j=\theta(i) \neq i$ with $j = 0,7 \pmod 8$ such that $c_i=c_j$.
\item[ii)] for each $i \neq 0,7 \pmod 8$, there exists a unique $j=\theta(i) \neq i$ with $j \neq 0,7 \pmod 8$ and $j= i \pmod 4$ such that $c_i=c_j$.
\item [iii)] for each $i$, there exists a unique $j \neq i$ with $j= i \pmod 2$ and $\bar c_i=\bar c_j$.
\item[iv)] for each $i = 0 \pmod 8$ with $0\leq i<2^{k+1}$, we have $\bar c_{i}=\bar c_{i+1}=\bar c_{i+2^{k+1}}=\bar c_{i+1+2^{k+1}}$.
\end{itemize}

For $k=2$, let $(\mathbb{Z}_2)^2=\left \{{0 \atop 0},{1 \atop 0},{0 \atop 1},{1 \atop 1} \right \}$, and let ${\bf c}$ be the sequence below:
\begin{equation}\label{casek2}
{\bf c}  :        {0 \atop 0} ,{{0 \atop 0}, {1 \atop 0}} , {1 \atop 1}, {1 \atop 0} ,{ {0 \atop 0},{1 \atop 1}}, {0 \atop 1},{0 \atop 1}, {{0 \atop 1},{1 \atop 0}},{1 \atop 1},{1 \atop 0},{{0 \atop 1},{1 \atop 1}}~{0 \atop 0},
\end{equation}
while ${\bf \bar c}$, the sequence of consecutive differences, is
$$
{\bf \bar c} : {0 \atop 0} ,{{0 \atop 0}} , {1 \atop 0} ,{{0 \atop 1}} , {0 \atop 1} ,{{1 \atop 0}} , {1 \atop 1} ,{{1 \atop 0}} , {0 \atop 0} ,{{0 \atop 0}} , {1 \atop 1} ,{{0 \atop 1}} ,{0 \atop 1} ,{{1 \atop 1}} , {1 \atop 0} ,{{1 \atop 1}}.$$
It is straightforward to check that the conditions (i)-(iv) are satisfied. 

Next, suppose that the claim is true for $k$ so that $c_0,\ldots, c_{2^{k+2}-1},$ satisfy the conditions (i)-(iv) above, and we will construct a sequence $d_0,\ldots, d_{2^{k+3}-1}$, in $(\mathbb{Z}_2)^{k+1}$ that satisfies the conditions (i)-(iv) (with $k$ replaced by $k+1$ and $c$ replaced by $d$). 
%
%We define the sequence $\eta_i: 0\leq i \leq 2^{k+2}-1$, as follows. Let $\eta_0=\eta_1=0$ and $\eta_{2^{k+2}-1}=1$. If $i = 3,5,7 \pmod 8$ and $i<2^{k+2}-1$, we let $\eta_i \in \{0,1\}$ be arbitrary. If $i = 2,4,6 \pmod 8$, we let exactly one of $\eta_i$ and $\eta_{\theta(i)}$ equal 0 and the other equal 1 (note that by property (ii), one has $\theta(i) = 2, 4, 6 \pmod 8$ as well). If $i>1$ and $i= 1 \pmod 8$, we let $\eta_i=\eta_{i-2}$. If $i = 0 \pmod 8$, choose $\eta_i$ and $\eta_{i+2^{k+1}}$ such that exactly one of $\eta_{i+1}-\eta_i$ and $\eta_{i+1+2^{k+1}}-\eta_{i+2^{k+1}}$ equals 0 and the other equals 1 modulo 2. 
We first define the sequence $\eta_i, 0\leq i \leq 2^{k+2}-1$, in $\mathbb{Z}_2$ as follows:
\begin{equation}\label{defeta}
\eta_{i}=\begin{cases}
0& \mbox{if $i$ is odd and $i<2^{k+2}-1$};\\
0 & \mbox{if $i=2,4,6 \pmod 8$ and $i<\theta(i)$};\\
0 & \mbox{if $0\leq i < 2^{k+1}$ and $i=0 \pmod 8$};\\
1 & \mbox{otherwise}.\\
\end{cases}
\end{equation}
Then we define the sequence $d_i: 0\leq i \leq 2^{k+3}-1$ in $(\mathbb{Z}_{2})^{k+1}=(\mathbb{Z}_{2})^k \times \mathbb{Z}_2$, by letting
$$d_{i}=\begin{cases}
(c_i, \eta_i)& \mbox{if $0\leq i <2^{k+2}$};\\
(c_{i-2^{k+2}}, \eta_{i-2^{k+2}}) & \mbox{if $2^{k+2}\leq i <2^{k+3}$ and $i=2,4,6 \pmod 8$};\\
(c_{i-2^{k+2}}, 1-\eta_{i-2^{k+2}}) & \mbox{if $2^{k+2}\leq i <2^{k+3}$ and $i=0,1,3,5,7 \pmod 8$}.\\
\end{cases}
$$
If $i<2^{k+2}-1$ and $i=1,3,5  \pmod 8$, then $d_i=d_{\theta(i)}=(c_i,0)$ and $d_{i+2^{k+2}}=d_{\theta(i)+2^{k+2}}=(c_i,1)$. If $i<2^{k+2}$ and $i=2,4,6 \pmod 8$, then $d_i=d_{i+2^{k+2}}=(c_i,\mu_i)$ and $d_{\theta(i)}=d_{\theta(i)+2^{k+2}}=(c_i,1-\mu_i)$. Therefore, condition (ii) in the statement of the lemma is satisfied. If $i<2^{k+2}$ and $i=0,7 \pmod 8$, then $\theta(i)=0,7 \pmod 8$, and so either $d_i=d_{\theta(i)}=(c_i,\mu_i)$ and $d_{i+2^{k+2}}=d_{\theta(i)+2^{k+2}}=(c_i,1-\mu_i)$ or $d_i=d_{\theta(i)+2^{k+2}}=(c_i,\mu_i)$ and $d_{\theta(i)}=d_{i+2^{k+2}}=(c_i,1-\mu_i)$. It follows that condition (i) is satisfied as well. 

Next, we consider the consecutive differences $\bar d_i: 0\leq i \leq 2^{k+3}-1$. If $i= 3,5,7 \pmod 8$, then
$$\bar d_{i+2^{k+2}}=-d_{i+2^{k+2}-1}+d_{i+2^{k+2}}=-(c_{i-1},\mu_{i-1})+(c_i,1-\mu_i)=(\bar c_i, 1-\bar \mu_i) \neq \bar d_i,$$
since $\bar d_i=(\bar c_i, \bar \mu_i)$. 
If $i=1 \pmod 8$ and $i<2^{k+1}$, then $\bar d_i=(\bar c_i, -\mu_{i-1}+\mu_i)=(\bar c_i,0)$, while 
$$\bar d_{i+2^{k+1}}=(\bar c_{i+2^{k+1}},-\mu_{i-1}+\mu_i)=(\bar c_i, 1) \neq \bar d_i.$$
Similarly, $\bar d_{i+2^{k+2}}\neq \bar d_{i+3 \times 2^{k+2}}$. Since the terms $\bar c_i$: $i$ odd, contain every element of $(\mathbb{Z})^{2^{k}}$ twice, it follows that the terms $\bar d_i$: $i$ odd, contain every element of $(\mathbb{Z})^{2^{k+1}}$ twice. The proof that $\bar d_i$: $i$ even, contain every element twice is similar. It follows that $d_i: 0\leq i \leq 2^{k+3}-1$, satisfy the properties (i)-(iv) in $(\mathbb{Z}_2)^{k+1}$. This completes the induction step and the lemma follows. 
\end{proof}

\begin{example}
In this example, we construct a sequence satisfying the conditions (i)-(iv) for $(\mathbb{Z}_2)^3$ by applying the inductive step given in the proof of Lemma \ref{2filter} to the the sequence \eqref{casek2} in $(\mathbb{Z}_2)^2$. Following the definition in \eqref{defeta}, we let
$$\eta_0=\eta_1=\eta_2=\eta_3=\eta_4=\eta_5=\eta_6=\eta_7=\eta_9=\eta_{11}=\eta_{13}=0,$$
and 
$$\eta_8=\eta_{10}=\eta_{12}=\eta_{14}=\eta_{15}=1.$$
%
%
%
%
%Following the construction given in Lemma \ref{2filter}, we let $\eta_0=\eta_1=\eta_{15}=0$. Also, we choose $\eta_3=\eta_{11}=\eta_{13}=1$ and $\eta_5=\eta_7=0$ (these are chosen arbitrarily). Moreover, for each pair $\{\eta_2,\eta_{10}\}, \{\eta_4, \eta_{12}\}, \{\eta_6, \eta_{14}\}$, we let one of them equal 0 and one of them equal 1. For example, we let $\eta_2=\eta_{12}=\eta_6=0$ and $\eta_{10}=\eta_4=\eta_{14}=1$. Finally, we need to make sure that $\eta_1-\eta_0$ and $\eta_9-\eta_8$ are different, by letting, for example, $\eta_8=0$ and $\eta_9=1$. 

Then, the sequence below satisfies the properties (i)-(iv) in $(\mathbb{Z}_2)^3$, where each element $(x,y,z)$ is denoted by $\substack{x \\ y\\ z}$:
{
\begin{equation}\nonumber
\substack{0\\0 \\0},
{\substack{0\\0 \\0},
\substack{0\\1 \\0}},
\substack{0\\1 \\1},
\substack{0\\1 \\0},
{\substack{0\\0 \\0},
\substack{0\\1 \\1}},
\substack{0\\0 \\1},
\substack{1\\0 \\1},
{\substack{0\\0 \\1},
\substack{1\\1 \\0}},
\substack{0\\1 \\1},
\substack{1\\1 \\0},
{\substack{0\\0 \\1},
\substack{1\\1 \\1}},
\substack{1\\0 \\0},
\substack{1\\0 \\0},
{\substack{1\\0 \\0},
\substack{0\\1 \\0}},
\substack{1\\1 \\1},
\substack{0\\1 \\0},
{\substack{1\\0 \\0},
\substack{0\\1 \\1}},
\substack{1\\0 \\1},
\substack{0\\0 \\1},
{\substack{1\\0 \\1},
\substack{1\\1 \\0}},
\substack{1\\1 \\1},
\substack{1\\1 \\0},
{\substack{1\\0 \\1},
\substack{1\\1 \\1}},
\substack{0\\0 \\0}.
\end{equation}
}
%{\Large
%\begin{equation}\nonumber
%\substack{0\\0 \\0}~
%\boxed{\substack{0\\0 \\0}~
%\substack{0\\1 \\0}}~
%\substack{1\\1 \\1}~
%\substack{1\\1 \\0}~
%\boxed{\substack{0\\0 \\0}~
%\substack{0\\1 \\1}}~
%\substack{0\\0 \\1}~
%\substack{0\\0 \\1}~
%\boxed{\substack{1\\0 \\1}~
%\substack{1\\1 \\0}}~
%\substack{1\\1 \\1}~
%\substack{0\\1 \\0}~
%\boxed{\substack{1\\0 \\1}~
%\substack{1\\1 \\1}}~
%\substack{1\\0 \\0}~
%\substack{1\\0 \\0}~
%\boxed{\substack{1\\0 \\0}~
%\substack{0\\1 \\0}}~
%\substack{0\\1 \\1}~
%\substack{1\\1 \\0}~
%\boxed{\substack{1\\0 \\0}~
%\substack{0\\1 \\1}}~
%\substack{1\\0 \\1}~
%\substack{1\\0 \\1}~
%\boxed{\substack{0\\0 \\1}~
%\substack{1\\1 \\0}}~
%\substack{0\\1 \\1}~
%\substack{0\\1 \\0}~
%\boxed{\substack{0\\0 \\1}~
%\substack{1\\1 \\1}}~
%\substack{0\\0 \\0}.
%\end{equation}
%}
\end{example}

Next, we see how a sequences in $(\mathbb{Z}_2)^k$ satisfying properties P1, P2, and P3 give rise to D-sequences in the even sequenceable case. 

\begin{lemma}\label{terraced2}
If $N$ is an even sequenceable group, then $N \times (\mathbb{Z}_2)^k$ has a cyclic D-sequence, where $k \geq 0$. 
\end{lemma}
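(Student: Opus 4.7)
The plan is to handle the cases $k = 0$, $k = 1$, and $k \geq 2$ separately. The case $k = 0$ is immediate from Lemma \ref{sR} applied with parameter $2$: the sequenceability of $N$ directly yields a cyclic D-sequence of $N$. For $k = 1$, the displayed sequence in equation \eqref{ex3} furnishes the desired cyclic D-sequence of $N \times \mathbb{Z}_2$: from a sequencing $h_0, \ldots, h_{n-1}$ of $N$ with $h_0 = 1_N$, one concatenates a forward block with constant second coordinate $0$, a reverse block with constant second coordinate $1$, a forward block whose second coordinates strictly alternate $1, 0, 1, 0, \ldots$, and a reverse block whose second coordinates strictly alternate $1, 0, \ldots, 1, 0$ (the parity condition that $n$ is even guarantees the alternating blocks end on the correct value). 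A direct check using the sequencing property of $\mathbf{h}$ shows that each element of $N \times \mathbb{Z}_2$ and each consecutive quotient appears exactly twice.

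For $k \geq 2$, the strategy is to combine the sequencing $h_0, \ldots, h_{n-1}$ of $N$ with the special sequence $\alpha_0, \ldots, \alpha_{2^{k+2}-1}$ in $(\mathbb{Z}_2)^k$ produced by Lemma \ref{2filter}. Since $n$ is even, the target length $n \cdot 2^{k+1}$ factors as $(n/2) \cdot 2^{k+2}$, so I would partition the $g$-sequence into $2^{k+2}$ half-blocks of length $n/2$, with the $j$-th half-block carrying second coordinate $\alpha_j$ throughout and first coordinates drawn from either the front half or the back half of $\mathbf{h}$ in a direction (forward or reverse) chosen from the class of $j$ modulo $8$. Properties (iii) and (iv) of the $\alpha$-sequence pair the four indices $j$ sharing any given value $\alpha_j = v$ into matched parity classes, and the halves of $\mathbf{h}$ can be assigned to these pairs so that each element $(h, v) \in N \times (\mathbb{Z}_2)^k$ appears exactly twice in $\mathbf{g}$. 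Dually, properties (i) and (ii) of $\alpha$ pair the four indices sharing any given $\bar{\alpha}_i = v$ into matched parity classes, aligning each quotient $(u, v)$ with exactly two positions in $\bar{\mathbf{g}}$.

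The main obstacle will be the bookkeeping at half-block boundaries, where the consecutive quotient combines $\bar{\alpha}_j$ in the second coordinate with the first-coordinate jump from the last $h$-value of one half-block to the first of the next. Condition (ii) of Lemma \ref{2filter}, which forces the quadruple match $\bar{\alpha}_i = \bar{\alpha}_{i+1} = \bar{\alpha}_{i+2^{k+1}} = \bar{\alpha}_{i+1+2^{k+1}}$ at indices $i \equiv 0 \pmod 8$, is tailored exactly to the delicate case $\bar{\alpha}_j = 0$, where the second coordinate does not change and the boundary quotient must be realized entirely in $N$: the quadruple match supplies the four boundary positions needed to cover each quotient $(u, 0)$ with the correct multiplicity. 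I expect the detailed verification to be a long case analysis split by the class of $j$ modulo $8$ and by whether the paired indices lie within one half-block or straddle a boundary, concluded by a final adjustment of the assignment of halves and directions to ensure cyclicity ($g_{L-1} = (1_N, 0)$); however, no new combinatorial ingredient beyond those already developed in Lemmas \ref{sR} and \ref{2filter} should be needed.
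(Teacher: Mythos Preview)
Your treatment of $k=0$ and $k=1$ matches the paper. For $k\geq 2$, however, the architecture you describe cannot yield a D-sequence. If the $j$-th half-block of length $n/2$ carries second coordinate $\alpha_j$ throughout, then every transition \emph{internal} to a half-block has second-coordinate quotient $0$. There are $2^{k+2}$ half-blocks, each contributing $n/2-1$ internal transitions, hence at least $2^{k+1}n-2^{k+2}$ indices $i$ with $\bar g_i\in N\times\{0\}$. But a D-sequence of $N\times(\mathbb{Z}_2)^k$ must contain exactly $2n$ such indices (two for each element of $N$), and $2^{k+1}n-2^{k+2}>2n$ whenever $k\geq 2$ and $n\geq 3$. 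No choice of front/back halves or directions in the first coordinate can repair this overcount, because the second-coordinate tally is determined solely by the half-block structure and is independent of what happens in the first coordinate.

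The paper's construction avoids this by \emph{alternating} the second coordinate within each block rather than holding it constant. It uses $2^{k+1}$ blocks of full length $n$; in block $q$ the first component traverses the sequencing $h_0,\ldots,h_{n-1}$ (forward if $q$ is even, reversed if $q$ is odd), while the second component alternates between $\alpha_{2q}$ and $\alpha_{2q+1}$ according to the parity of the position $j$ within the block. Every internal transition in block $q$ then has second-coordinate quotient $\bar\alpha_{2q+1}$, and the transition into block $q+1$ has second-coordinate quotient $\bar\alpha_{2q+2}$. Property~(i) of Lemma~\ref{2filter} now guarantees that every element of $(\mathbb{Z}_2)^k$ occurs as $\bar\alpha$ at exactly two odd indices and two even indices, yielding precisely $2(n-1)+2=2n$ quotients with each prescribed second coordinate. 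Your instinct that properties (iii)--(iv) govern the element count and (i)--(ii) govern the quotient count is correct, but the mechanism that makes it work is this within-block alternation, not constant-coordinate half-blocks.
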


\begin{proof}
When $k=0$, the claim follows from Lemma \ref{sR1}. For $k\geq 1$, let $c_0,\ldots, c_{2^{k+2}-1}$, be the sequence given by Lemma \ref{2filter} that satisfies properties P1, P2, and P3. Let ${\bf h}: h_0,\ldots, h_{n-1}$, be a sequence in $N$ such that every element of $N$ appears exactly once in each of ${\bf h}$ and ${\bf \bar h}$. We define a cyclic D-sequence ${\bf g}: g_0,\ldots, g_{2^{k+1}n-1}$, in $G=N \times (\mathbb{Z}_2)^k$ as follows. Given $0\leq i \leq 2^{k+1}n-1$, we write $i=qn+j$, where $0\leq j \leq n-1$ and $0\leq q \leq 2^{k+1}-1$. Let
$$g_i=\begin{cases}
(h_j,c_{2q})& \mbox{if $q$ is even and $j$ is even;}\\
(h_j,c_{2q+1})& \mbox{if $q$ is even and $j$ is odd;}\\
(h_{n-j-1},c_{2q})& \mbox{if $q$ is odd and $j$ is even;}\\
(h_{n-j-1},c_{2q+1})& \mbox{if $q$ is odd and $j$ is odd.}\\
\end{cases}
$$
More schematically, the sequence ${\bf g}$ is given by \eqref{exfinal}. To see that every element of $G$ appears twice in ${\bf g}$, we note that, for even values of $i$, each $h_i$ is paired with the top components $c_j,~j=0, 3 \pmod 4$, which contain every element of $(\mathbb{Z}_2)^k$ twice by the property P1. Similarly, if $i$ is odd, each $h_i$ is paired with the top components $c_j: j=1,2 \pmod 4$, which contain every element of $(\mathbb{Z}_2)^k$ twice by the property P2. The sequence ${\bf \bar g}$ is given by \eqref{exfinal2}. It follows that each $\bar h_i$, $i \neq 0$, is paired with top components $\bar c_i: i=1 \pmod 2$, which contain every element of $(\mathbb{Z}_2)^k$ twice by the property P3. Moreover, $\bar h_0=0$ is paired with top components $\bar c_i: i =0 \pmod 2$, which also contain every element of $(\mathbb{Z}_2)^k$ twice by the property P3. It follows that ${\bf g}$ is a cyclic D-sequence in $N \times (\mathbb{Z}_2)^k$. 
\end{proof}

\section{Proof of the main theorem}\label{maint}

We are now ready to prove Theorem \ref{mainnil} which states that if $N$ is an odd or sequenceable group and $H$ is an abelian group, then $N\times H$ is D-sequenceable. 

{\it Proof of Theorem \ref{mainnil}.} 
First, suppose that $N$ is a sequenceable group, and let $H=(\mathbb{Z}_{2})^k\times H^\prime$ be an abelian group such that $k\geq 0$ and $H^\prime$ is an abelian group with no $\mathbb{Z}_2$ factors appearing in its primary decomposition. By Lemmas \ref{oddcase} and \ref{terraced2}, $N\times (\mathbb{Z}_2)^k$ has a cyclic D-sequence, and so it follows from Lemmas \ref{extodd} and \ref{extpower2} that $N \times H$ has a cyclic D-sequence. 

Next, suppose that $N$ is an odd group. If $H$ has odd order, then $N\times H$ is an odd group, hence D-sequenceable. Thus, suppose that $H=(\mathbb{Z}_{2^l})\times K$, where $l\geq 1$ and $K$ is an abelian group. It follows that $N \times \mathbb{Z}_{2^l}$ is a solvable binary group. Since all solvable binary groups, except the quaternion group, are sequenceable \cite{AI2}, the group $N \times \mathbb{Z}_{2^l}$ is sequenceable. The claim then follows from the first case above.  \hfill $\square$

\end{document}